 \newtheorem{theorem}{Theorem}[section]
 \newtheorem{lemma}[theorem]{Lemma}
 \newtheorem{proposition}[theorem]{Proposition}
\newcommand{\la}{\langle}% left angle
\newcommand{\ra}{\rangle}% right angle
\begin{document}

%---------------------------------------------------------------------------

\title[Strongly Closed $2$-subgroups]{\bf Influence of strongly closed $2$-subgroups on the structure of finite groups }

\author{ Hung P. Tong-Viet}

\address{Department of Mathematics, University of Birmingham, Edgbaston, Birmingham, B15 2TT, UK}

\email{tongviet@maths.bham.ac.uk}

\subjclass[2000]{Primary 20D20}

%----------classification, keywords, date
%\subjclass{Primary 20D10; Secondary 20D05}

\keywords{strongly closed, p-nilpotent, supersolvable}

\date{\today}

\begin{abstract}
Let $H\leq K$ be subgroups of a group $G.$ We say that $H$ is
strongly closed in $K$ with respect to $G$ if whenever $a^g\in K,$
where $a\in H, g\in G,$ then $a^g\in H.$ In this paper, we
investigate the structure of a group G under the assumption that
every subgroup of order $2^m$ (and $4$ if $m=1$) of a  $2$-Sylow
subgroup $S$ of G is strongly closed in $S$ with respect to $G.$
Some results related to $2$-nilpotence and supersolvability of a
group $G$ are obtained. This is a complement to Guo and Wei (J.
Group Theory \textbf{13} (2010), no. 2, 267--276).
\end{abstract}

\thanks{Support from the Leverhulme Trust is acknowledged}
\maketitle

\section{Introduction}
All groups are finite. Let $H\leq K$ be subgroups of a group $G.$ We
say that $H$ is \emph{strongly closed} in $K$ with respect to $G$ if
whenever $a\in H, a^g\in K,$ where $g\in G$ then $a^g \in H.$ We
also say that $H$ is strongly closed in $G$ if $H$ is strongly
closed in $N_G(H)$ with respect to $G.$ The structure of groups
which possess a strongly closed $p$-subgroup has been extensively
studied. One of the most interesting results is due to Goldschmidt
\cite{G74} which classified groups with an abelian strongly closed
$2$-subgroup. This result is a generalization of the celebrated
Glauberman $Z^*$-theorem. These results play an important role in
the proof of the classification of the finite simple groups.
Recently, Bianchi et al. in \cite{Bia}, called a subgroup $H,$ an
\emph{$\mathcal{H}$-subgroup} of $G$ if $H^g\cap N_G(H)\leq H$ for
all $g\in G.$ It is easy to see that these two definitions coincide.
With this concept, they gave a new characterization of supersolvable
groups in which normality is a transitive relation which are called
supersolvable $\mathcal{T}$-groups. In more detail, it is shown that
every subgroup of $G$ is strongly closed in $G$ if and only if $G$
is a supersolvable $\mathcal{T}$-group (see \cite[Theorem
$10$]{Bia}). Some local versions of this result have been studied in
\cite{Asa1} and \cite{Guo}. For example, Asaad (\cite[Theorem
$1.1$]{Asa1}) proved that $G$ is $p$-nilpotent if and only if every
maximal subgroup of a $p$-Sylow subgroup $P$ of $G$ is strongly
closed in $G$ and $N_G(P)$ is $p$-nilpotent. Guo and Wei
(\cite[Theorem $3.1$]{Guo}) showed that whenever $p$ is odd and $P$
is a $p$-Sylow subgroup of $G,$ $G$ is $p$-nilpotent if and only if
$N_G(P)$ is $p$-nilpotent and either $P$ is cyclic or every
nontrivial proper subgroup of a given order of $P$  is strongly
closed in $G.$  Also these results still hold without the
$p$-nilpotence assumption on $N_G(P)$ if $p$ is the smallest prime
divisor of the order of $G.$ The purpose of this paper is to prove
the following theorem, which is a complement to \cite[Theorem
$3.1$]{Guo}.

\begin{theorem}\label{th1}
Let $P\in Syl_2(G)$ and $D\leq P$ with $1<|D|<|P|.$ If $P$ is either
cyclic or every subgroup of $P$ of order $|D|$ \emph{(}and $4$ if
$|D|=2$\emph{)} is strongly closed in $G,$ then $G$ is
$2$-nilpotent.
\end{theorem}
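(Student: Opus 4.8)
The plan is to argue by contradiction: let $G$ be a counterexample of least order and force it into the rigid shape of a minimal non-$2$-nilpotent group.

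If $P$ is cyclic then, as $2$ is the smallest prime dividing $|G|$, the classical transfer argument shows that a group with a cyclic Sylow $2$-subgroup has a normal $2$-complement, so $G$ is $2$-nilpotent. Hence I may assume $P$ is non-cyclic and that every subgroup of $P$ of order $|D|$ -- and every subgroup of order $4$ when $|D|=2$ -- is strongly closed in $G$. Two elementary remarks drive the argument. First, if $H\le P$ is strongly closed in $P$ with respect to $G$, then taking the conjugating element inside $P$ gives $a^g\in P$ for $a\in H$, $g\in P$, so $H\trianglelefteq P$. Secondly, the hypothesis is inherited: if $K\le G$ and $P_K:=K\cap P\in Syl_2(K)$ satisfies $|P_K|>|D|$, then every subgroup of $P_K$ of order $|D|$ (and of order $4$ if $|D|=2$) is a subgroup of $P$ of that order, hence strongly closed in $G$ and therefore strongly closed in $P_K$ with respect to $K$; by minimality such a $K$ is $2$-nilpotent.

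Granting for the moment that \emph{every} proper subgroup of $G$ is $2$-nilpotent, $G$ is minimal non-$2$-nilpotent, and I invoke It\^o's structure theorem: $G=P\rtimes Q$ with $P=O_2(G)$ the normal Sylow $2$-subgroup, $Q=\la y\ra$ a cyclic Sylow $q$-subgroup for an odd prime $q$ acting nontrivially, $\exp(P)\le 4$, and $\overline P:=P/\Phi(P)$ a nontrivial irreducible $\mathbb F_2Q$-module, so $\dim_{\mathbb F_2}\overline P\ge 2$. Because $P\trianglelefteq G$, the first remark upgrades sharply: for a strongly closed $H\le P$ and any $g\in G$ one has $a^g\in P^g=P$ for all $a\in H$, whence $a^g\in H$; thus every subgroup of $P$ of order $|D|$ (and of order $4$ if $|D|=2$) is in fact normal in $G$, in particular $Q$-invariant. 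Now choose $x\in P\setminus\Phi(P)$; since $\exp(P)\le 4$ we have $|x|\in\{2,4\}$ and $\overline x\ne 0$, and since $Q$ acts nontrivially and irreducibly with $\dim\overline P\ge 2$ some $y\in Q$ sends $\overline x$ outside $\la\overline x\ra$, so $w:=x^y\in P$ has $\overline w\notin\la\overline x\ra$, and in particular $w\notin\la x\ra$. Pick a maximal subgroup of $\overline P$ containing $\overline x$ but not $\overline w$, let $H$ be its full preimage in $P$, and inside the $2$-group $H$ (of order $|P|/2\ge|D|$) enlarge $\la x\ra$ to a subgroup $F$ with $|F|=|D|$; then $x\in F$ but $w\notin H\supseteq F$. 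As $F$ is strongly closed and $x^y=w\in P$, strong closure forces $w\in F$, a contradiction. (When $|D|=2$ one takes instead $F=\la x\ra$ of order $2$, or of order $4$ if $|x|=4$, which is precisely where the order-$4$ clause is used.)

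The delicate point is the clause ``every proper subgroup is $2$-nilpotent'': the inheritance above only covers subgroups $K$ with $|K\cap P|>|D|$, so a priori $G$ might fail to be minimal non-$2$-nilpotent because of a proper non-$2$-nilpotent subgroup $M$ with $|M\cap P|\le|D|$. Controlling these low-Sylow subgroups is the main obstacle. I expect to dispose of them using Goldschmidt's classification of groups with an abelian strongly closed $2$-subgroup -- flagged in the introduction -- applied to the Sylow $2$-subgroup of such an $M$ when it has order exactly $|D|$ (hence is itself strongly closed) and is abelian: Goldschmidt's conclusion is incompatible with the nontrivial coprime action that minimal non-$2$-nilpotency of $M$ forces on $M\cap P$. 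The residual configuration where $M\cap P\le\Phi(P)$, so that the hyperplane separation of the previous paragraph is unavailable, is the case demanding the most care.
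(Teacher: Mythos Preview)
Your endgame is sound: once $G$ is a Schmidt group $P\rtimes Q$ with $P\trianglelefteq G$, strong closure upgrades to normality in $G$, and the hyperplane-separation argument gives a clean contradiction. The problem is getting there. Your inheritance only covers proper subgroups whose Sylow $2$-subgroup has order exceeding $|D|$; for a subgroup $M$ with $|M|_2\le|D|$ the hypothesis says nothing usable about $M$, and such $M$ can perfectly well fail to be $2$-nilpotent (an $A_4$ inside $G$ when $|D|\ge 8$, say). You propose Goldschmidt's abelian strongly closed theorem for the borderline $|M|_2=|D|$, but this is not carried out: the Sylow $2$-subgroup $S$ of a minimal non-$2$-nilpotent $M$ need not be abelian (exponent $4$ is allowed), and even when it is, Goldschmidt constrains $\langle S^G\rangle$, not $M$; and you offer nothing at all for $|M|_2<|D|$. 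Without closing this gap, It\^o does not apply and the reduction collapses.

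The paper sidesteps It\^o entirely. In a minimal counterexample it establishes $O_{2'}(G)=1$ and shows every proper \emph{normal} subgroup lies in $O_2(G)$ --- this needs inheritance only to overgroups of $P$ or of a maximal subgroup of $P$, where it is available. From this and Goldschmidt's Corollary~B3 it obtains $F^*(G)=O_2(G)$, then shows $P$ is maximal in $G$, so $G/O_2(G)$ is simple with a maximal Sylow $2$-subgroup; Baumann's classification forces $G/O_2(G)\cong L_2(q)$ with $q$ a Fermat or Mersenne prime, and a dihedral maximal subgroup of $L_2(q)$ furnishes the contradiction. The case $P\trianglelefteq G$ is handled by a separate inductive lemma (an odd-order group normalising every subgroup of a $2$-group $P$ of order $|D|$ must centralise $P$), which plays the role of your hyperplane argument but without the Schmidt structure. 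If you want to salvage your route, you need to prove $P\trianglelefteq G$ in the minimal counterexample without passing through ``all proper subgroups are $2$-nilpotent''; the paper's $F^*$/Baumann path is one way, and I do not see a materially shorter one.
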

The following example shows that the additional assumption when $|D|=2$ in Theorem \ref{th1} is necessary.

\noindent {\bf Example.} Let $G=SL_2(17).$ Then if $P\in Syl_2(G)$
then $P\cong Q_{32},$ a quaternion group of order $32.$ Moreover
$P$ is maximal in $G$ and hence $N_G(P)=P$ is $2$-nilpotent in $G.$
Clearly, the center of $G$ is a unique subgroup of order $2$ and so
it is strongly closed in $G.$ However $G$ is not $2$-nilpotent.

Theorem \ref{th1} above and \cite[Theorem $3.4$]{Guo} now yield:

\begin{theorem}\label{th2}
Let $p$ be the smallest prime divisor of $|G|$ and $P\in Syl_p(G).$
If $P$ is cyclic or $P$ has a subgroup $D$ with $1<|D|<|P|$ such
that every subgroup of $P$ of order $|D|$ \emph{(}and $4$ if
$|D|=2$\emph{)} is strongly closed in $G,$ then $G$ is
$p$-nilpotent.
\end{theorem}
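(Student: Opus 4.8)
The plan is to dispose of the theorem by a short case analysis on the prime $p$, invoking Theorem~\ref{th1} when $p=2$ and the Guo--Wei result \cite[Theorem~$3.4$]{Guo} when $p$ is odd. The guiding point is that the hypothesis on $P$ in Theorem~\ref{th2} has been arranged so as to match the hypotheses of these two results exactly; consequently no genuinely new group-theoretic argument should be needed, and essentially all the work lies in verifying that the two sets of hypotheses line up.

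First I would treat the case $p=2$. Here $2$ is the smallest prime divisor of $|G|$, and the hypothesis of Theorem~\ref{th2}---that $P$ is cyclic, or that some $D$ with $1<|D|<|P|$ has the property that every subgroup of $P$ of order $|D|$ (and $4$ when $|D|=2$) is strongly closed in $G$---is word for word the hypothesis of Theorem~\ref{th1}. Hence Theorem~\ref{th1} applies verbatim and yields that $G$ is $2$-nilpotent. Next I would treat the case where $p$ is odd. Since $p$ is the smallest prime divisor of $|G|$ and $p>2$, the prime $2$ cannot divide $|G|$, so $|G|$ is odd. Moreover $D$ is a nontrivial $p$-subgroup, so $|D|$ is a power of $p$ and thus $|D|\geq p\geq 3$; in particular $|D|\neq 2$, so the clause ``and $4$ if $|D|=2$'' is vacuous. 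The hypothesis of Theorem~\ref{th2} therefore reduces to: $P$ is cyclic, or every subgroup of $P$ of a fixed order $|D|$ with $1<|D|<|P|$ is strongly closed in $G$. This is precisely the hypothesis under which \cite[Theorem~$3.4$]{Guo} concludes $p$-nilpotence for the smallest (odd) prime divisor, so that result applies and again gives that $G$ is $p$-nilpotent.

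The only subtlety---and the step I would check most carefully---is the bookkeeping in the odd case: one must confirm that ``a subgroup of a given order'' in the Guo--Wei formulation means exactly fixing a single value $|D|$ and requiring all subgroups of $P$ of that order to be strongly closed in $G$, and that the auxiliary order-$4$ condition plays no role whatsoever once $p$ is odd. Beyond this alignment of statements there is no analytic or structural obstacle, since the two underlying theorems have already carried out all of the substantive work; the proof of Theorem~\ref{th2} is thus essentially an assembly of Theorem~\ref{th1} and \cite[Theorem~$3.4$]{Guo} across the two parities of $p$.
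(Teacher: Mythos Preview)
Your proposal is correct and matches the paper's approach exactly: the paper simply states that Theorem~\ref{th1} together with \cite[Theorem~$3.4$]{Guo} yields Theorem~\ref{th2}, which is precisely the $p=2$/$p$ odd case split you carry out. Your additional bookkeeping (noting that $|G|$ is odd when $p>2$ and that the order-$4$ clause is vacuous) makes explicit what the paper leaves implicit.
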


We can now drop the odd order assumption on Theorems $3.5$ and $3.6$ in \cite{Guo}.

\begin{theorem}\label{th3}
If every non-cyclic Sylow subgroup $P$ of $G$ has a subgroup $D$
with $1<|D|<|P|$ such that every subgroup of $P$ of order $|D|$
\emph{(}and $4$ if $|D|=2$\emph{)} is strongly closed in $G,$ then
$G$ is supersolvable.
\end{theorem}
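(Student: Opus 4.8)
The plan is to argue by induction on $|G|$, treating the case that $G$ is a $p$-group separately (a $p$-group is nilpotent, hence supersolvable). Assuming $|G|$ has at least two prime divisors, let $p$ be the smallest prime dividing $|G|$ and $P\in\mathrm{Syl}_p(G)$. The hypothesis gives that $P$ is either cyclic or carries the strong closure condition, so Theorem \ref{th2} applies and $G$ has a normal $p$-complement $K$. A short check shows that strong closure in $G$ restricts to subgroups: if $H\leq M\leq G$ and $H^g\cap N_G(H)\leq H$ for all $g\in G$, then $H^m\cap N_M(H)\leq H^m\cap N_G(H)\leq H$ for all $m\in M$. Hence every non-cyclic Sylow subgroup of $K$ (these are the Sylow subgroups of $G$ at the primes $\neq p$) still admits a subgroup of the required order that is strongly closed in $K$, so $K$ inherits the hypothesis and, by induction, $K$ is supersolvable. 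Since $K\trianglelefteq G$ is supersolvable, it has a characteristic Sylow subgroup $R$ for the largest prime $r$ dividing $|K|$; as $r$ is then the largest prime dividing $|G|$ and $r\neq p$, we obtain $R\trianglelefteq G$ and $R\in\mathrm{Syl}_r(G)$.

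The key local input will be a normality lemma: \emph{any subgroup $H$ of the normal Sylow subgroup $R$ that is strongly closed in $G$ is in fact normal in $G$.} To prove it, first restrict the strong closure condition to the $r$-group $R$; in a $p$-group an $\mathcal{H}$-subgroup is automatically normal, because if $N_R(H)<R$ then, choosing $x\in N_R(N_R(H))\setminus N_R(H)$, one has $H^x\leq N_R(H)$, so the $\mathcal{H}$-condition forces $H^x\leq H$ and hence $x\in N_R(H)$, a contradiction. Thus $H\trianglelefteq R$, whence $R\leq N_G(H)$; since $R\trianglelefteq G$ gives $H^g\leq R\leq N_G(H)$ for every $g\in G$, the $\mathcal{H}$-condition now yields $H^g\leq H$, i.e. $H\trianglelefteq G$. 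Consequently, when $R$ is non-cyclic every subgroup of $R$ of order $|D|$ is normal in $G$, while if $R$ is cyclic all of its (characteristic) subgroups are normal in $G$.

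It then remains to pass to $G/R$ and to analyse $R$ internally. For the quotient, since the distinguished subgroups $D$ at primes $q\neq r$ have order coprime to $|R|$, a Frattini argument identifying $N_{G/R}(DR/R)$ with $N_G(D)R/R$ shows that their strong closure survives in $G/R$; cyclic Sylow subgroups remain cyclic, so $G/R$ satisfies the hypothesis and is supersolvable by induction. Internally, I would show that every $G$-chief factor $U/V$ lying inside $R$ has order $r$: such a factor is an irreducible $\mathbb{F}_r G$-module, and for each subgroup $X\leq R$ of order $|D|$ the subgroup $(X\cap U)V/V$ is $G$-invariant in $U/V$, hence is $0$ or all of $U/V$ by irreducibility; producing, from the supply of normal order-$|D|$ subgroups, one whose image in $U/V$ is a proper nonzero subspace forces $\dim_{\mathbb{F}_r}(U/V)=1$. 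Splicing a $G$-chief series of $G/R$ with the resulting order-$r$ $G$-chief series through $R$ then exhibits $G$ as supersolvable. The main obstacle is precisely this last internal step: realising enough proper nontrivial subspaces of an arbitrary chief factor $U/V$ as intersections $X\cap U$ with order-$|D|$ subgroups of the (possibly non-abelian) $r$-group $R$ is a delicate fitting problem, and reducing it to the elementary abelian, scalar-action case is where the real work lies; a secondary technicality is making the coprime quotient inheritance for $G/R$ fully rigorous. Note that the clause ``and $4$ if $|D|=2$'' plays no role in this argument, since $r$ is odd whenever $G$ is not a $2$-group.
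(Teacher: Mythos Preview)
Your outline follows essentially the same skeleton as the paper's proof: apply Theorem~\ref{th2} at the smallest prime to obtain a normal $p$-complement (iterating this yields a Sylow tower of supersolvable type), reduce to the case where the largest prime $r$ is odd, and then analyse the normal Sylow $r$-subgroup $R$. The paper is extremely terse at exactly the point where you hesitate: it simply says ``the proof now proceeds as in that of Theorem~3.5 in \cite{Guo}'' and writes nothing further. So the ``main obstacle'' you identify---showing that the $G$-chief factors inside $R$ have order $r$---is not carried out in this paper at all; it is delegated to \cite{Guo}.

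Two comments on the technicalities you flag. First, the inheritance issues are lighter than you fear: the paper's Lemma~\ref{quot} records that strong closure passes to subgroups (part~(a)), to quotients together with the equality $N_{\bar G}(\bar H)=\overline{N_G(H)}$ (part~(b)), and that a subnormal strongly closed subgroup is normal (part~(c)). Part~(c) is exactly your ``normality lemma'' for subgroups of $R$, and part~(b) disposes of the quotient $G/R$ without any Frattini bookkeeping, since $|R|$ is coprime to the orders of the relevant Sylow subgroups. Second, your chief-factor strategy can be made to work, but the cleaner route (and the one underlying \cite[Lemma~2.10]{Guo}, of which Lemma~\ref{minnorm} here is a special case) is to observe that once every order-$|D|$ subgroup of $R$ is normal in $G$, so are all their intersections; in an $r$-group these intersections reach every subgroup of order $\le |D|$, in particular every subgroup of order $r$. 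With every minimal subgroup of $R$ normal in $G$ and $G/R$ supersolvable, standard criteria give supersolvability of $G$.

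One small caution: the clause ``and $4$ if $|D|=2$'' is not idle in the overall argument. It plays no role in the analysis of the odd $r$-group $R$, as you say, but it is precisely what is needed to invoke Theorem~\ref{th2} at the prime $2$ and thereby obtain the normal $2$-complement in the first place.
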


\begin{theorem}\label{th4}
Let $E$ be a normal subgroup of $G$ such that $G/E$ is
supersolvable. If every non-cyclic Sylow subgroup $P$ of $E$ has a
subgroup $D$ with $1<|D|<|P|$ such that every subgroup of $P$ of
order $|D|$ \emph{(}and $4$ if $|D|=2$\emph{)} is strongly closed in
$G,$ then $G$ is supersolvable.
\end{theorem}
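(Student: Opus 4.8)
The plan is to argue by induction on $|E|$, the case $E=1$ being immediate since then $G\cong G/E$ is supersolvable. For the inductive step the goal is to show that every $G$-chief factor lying inside $E$ is cyclic; together with the supersolvability of $G/E$ this produces a $G$-chief series with cyclic factors, i.e. $G$ is supersolvable. Throughout I would use the two elementary stability properties of the hypothesis: if $N\trianglelefteq G$ and $H$ is strongly closed in $G$ then $HN/N$ is strongly closed in $G/N$, and if $H\le L\le G$ then $H$ strongly closed in $G$ is strongly closed in $L$. The second property lets me feed $E$ and its normal subgroups into the earlier results; the first is what makes the induction on quotients run, and it is precisely here that the clause ``$4$ if $|D|=2$'' is needed, since on passing to a quotient a subgroup of order $2$ may collapse and one must fall back on the strongly closed subgroups of order $4$ to produce a legitimate $D$ for the quotient.

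First I would peel off the smallest prime. Let $p$ be the smallest prime dividing $|E|$ and $P\in Syl_p(E)$. If $P$ is non-cyclic the hypothesis furnishes the required $D$, and if $P$ is cyclic Theorem~\ref{th2} applies directly; either way Theorem~\ref{th2}, applied to $E$ and using that strong closure in $G$ descends to $E$, shows that $E$ is $p$-nilpotent. Let $K=O_{p'}(E)$ be its normal $p$-complement; then $K$ is characteristic in $E$, hence $K\trianglelefteq G$, and $|K|<|E|$. If $K\neq 1$ I would finish by two applications of the inductive hypothesis: applied to the pair $(G/K,E/K)$, whose normal subgroup $E/K\cong P$ has smaller order and inherits the hypothesis, it gives that $G/K$ is supersolvable; then applied to $(G,K)$, now that $G/K$ is known supersolvable and the non-cyclic Sylow subgroups of $K$ are Sylow subgroups of $E$ and so inherit the hypothesis, it gives that $G$ is supersolvable. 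This reduces the problem to the case $K=1$, i.e. to the situation where $E=P$ is a \emph{normal $p$-subgroup of $G$}.

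So assume $E=P\trianglelefteq G$ is a $p$-group; if $P$ is cyclic its subgroups are characteristic and the chief factors inside it have order $p$, so I may assume $P$ non-cyclic and fix $D$ as in the hypothesis. Taking a minimal normal subgroup $N$ of $G$ with $N\le P$ and quotienting by $N$, the inductive hypothesis (the normal subgroup $P/N$ has smaller order) makes $G/N$ supersolvable, so everything comes down to proving that $|N|=p$. Here $N$ is elementary abelian, and since $N\cap Z(P)$ is a nontrivial normal subgroup of $G$ contained in $N$, minimality gives $N\le Z(P)$; in particular $P\le C_G(N)$, so $\overline G:=G/C_G(N)$, being a quotient of the supersolvable group $G/P=G/E$, is supersolvable and acts faithfully and irreducibly on $N$. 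The key leverage of the hypothesis is that a subgroup $H$ of the abelian normal group $Z(P)$ of order $|D|$, being strongly closed, is automatically normal in $G$: indeed $H^g\le Z(P)\le N_G(H)$ forces $H^g=H$ for all $g$. Consequently every subgroup of $Z(P)$ of order $|D|$ is $G$-invariant; feeding this into the linear action of $\overline G$ on $N$, after reducing to $\Omega_1(Z(P))$, forces $\overline G$ to act as scalars once enough subspaces of a fixed dimension are invariant, and a scalar irreducible action has degree $1$, giving $|N|=p$.

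The main obstacle is exactly this last step. The normality-of-$H$ argument immediately yields $|N|\le|D|$, since a proper nontrivial subgroup of $N$ of order $|D|$ would be normal, contradicting minimality; but when $|D|$ is as large as, or larger than, $|N|$ — equivalently when $N$ does not properly contain subgroups of order $|D|$ — one cannot read off invariance of intermediate subspaces of $N$ directly, and the delicate case is $N=Z(P)$ with $\dim N\ge 2$. Resolving it requires combining the strong closure of the order-$|D|$ subgroups $H$ with $N\le H\le P$ with the structure of the faithful irreducible supersolvable linear group $\overline G$, or invoking an auxiliary lemma on strongly closed subgroups of a Sylow subgroup of the kind developed earlier in the paper, in order to manufacture a proper nonzero $G$-invariant subspace of $N$ and contradict irreducibility. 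I expect this fusion and linear-algebra analysis, rather than the formal inductive bookkeeping, to be the crux of the proof.
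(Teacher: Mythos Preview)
Your route diverges substantially from the paper's, and the divergence is exactly where your acknowledged obstacle lies. The paper does \emph{not} induct on $|E|$ or chase a minimal normal subgroup inside a normal $p$-group. Instead it first applies Theorem~\ref{th3} (together with Lemma~\ref{quot}) to conclude that $E$ itself is supersolvable, and then works with the \emph{largest} prime $p$ dividing $|E|$. For $p>2$ the argument is already in Guo--Wei, so the only new case is $p=2$, which forces $E$ to be a $2$-group. Now $G/E$, being supersolvable, is $2$-nilpotent; writing $K/E$ for its normal $2$-complement, Schur--Zassenhaus gives $K=EA$ with $|A|$ odd. Theorem~\ref{th1} applied to $AE$ (where $E\in Syl_2(AE)$ inherits the hypothesis by Lemma~\ref{quot}(a)) makes $AE$ $2$-nilpotent, so $A=O_{2'}(AE)$ is characteristic in $K\unlhd G$, hence $A\unlhd G$. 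Then $G/A$ is a $2$-group and $G/E$ is supersolvable with $A\cap E=1$, and the saturated-formation property of supersolvability finishes. No minimal normal subgroup, no irreducible module, no fusion analysis.

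Your self-identified obstacle is a genuine gap, not merely a technicality. Supersolvability of $\overline G=G/C_G(N)$ does \emph{not} by itself force a faithful irreducible $\mathbb F_pG$-module to be one-dimensional (e.g.\ $S_3$ on $\mathbb F_7^{\,2}$), and your normality argument only yields $|N|\le |D|$; when $|N|\le |D|$ you have no invariant proper subspace of $N$ to point to. Bridging this would essentially require re-proving, in module-theoretic language, the coprime-action Lemma~\ref{fixact} (and its odd-prime analogue buried in \cite{Guo}) --- exactly the machinery the paper packages into Theorems~\ref{th1} and \ref{th3} and then invokes wholesale. There is also a second, quieter gap in your induction: when you pass to $(G/N,\,P/N)$ you need a legitimate $D'$ for $P/N$, and taking $|D'|=|D|/|N|$ can land you at $|D'|=2$ even when the original $|D|>2$, in which case the required ``order $4$'' clause for $P/N$ does not follow from the hypothesis on $P$. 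The paper sidesteps both issues by never quotienting inside $E$ and by pushing all the hard work into the earlier theorems and the formation argument.
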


\section{Preliminaries}
In this section, we collect some results needed in the proofs of the main theorems.
\begin{lemma}\emph{ (Schur-Zassenhauss \cite[Theorem $6.2.1$]{gor}).}\label{SZ}
If $P$ is a normal $2$-Sylow subgroup of $G$ then $G$ possesses a
complement Hall-$2'$-subgroup.
\end{lemma}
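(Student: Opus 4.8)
The statement is the special case of the Schur--Zassenhaus theorem in which the normal Hall subgroup is a $2$-group, and I would prove it in a self-contained way by induction on $|G|$, reducing to the case where $P$ is abelian. Throughout write $n=|G:P|$; since $|P|$ is a power of $2$ and $n$ is odd, $|P|$ and $n$ are coprime, so the power map $u\mapsto u^{n}$ is a bijection of any abelian $2$-subgroup of $G$. Note that only the \emph{existence} of a complement is required, not the conjugacy of complements, which simplifies the argument considerably.

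For the inductive step, suppose $P$ is non-abelian and let $Z=Z(P)$. Being characteristic in the normal subgroup $P$, $Z$ is normal in $G$, and $P/Z$ is a normal Sylow $2$-subgroup of $G/Z$ with $|G/Z|<|G|$. By induction $P/Z$ has a complement $K/Z$ in $G/Z$; pulling back, $K\leq G$ satisfies $P\cap K=Z$ and $|K|=|Z|\,n<|P|\,n=|G|$ because $Z\neq P$. Now $Z$ is a normal abelian Sylow $2$-subgroup of $K$, so by induction again $Z$ has a complement $H$ in $K$. Then $|H|=n$ and $H\cap P=H\cap(P\cap K)=H\cap Z=1$, so $H$ is the desired Hall $2'$-subgroup of $G$. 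This reduces everything to the case $P$ abelian.

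For the base case $P$ abelian, I would fix a transversal $\{t_x\}_{x\in\bar G}$ of $P$ in $\bar G=G/P$ and form the factor set $f(x,y)\in P$ defined by $t_xt_y=f(x,y)t_{xy}$, which satisfies the usual $2$-cocycle identity $f(x,y)f(xy,z)=f(y,z)^{x}f(x,yz)$ coming from associativity (the action of $\bar G$ on $P$ being well defined as $P$ is abelian). Setting $d(x)=\prod_{z\in\bar G}f(x,z)\in P$ and multiplying the cocycle identity over all $z\in\bar G$ yields a relation of the shape $f(x,y)^{n}=d(x)\,d(y)^{x}\,d(xy)^{-1}$. Since $n$ is invertible modulo $|P|$, each $d(x)$ has a unique $n$-th root $e(x)\in P$, and replacing $t_x$ by $s_x=e(x)^{-1}t_x$ trivialises the factor set, so that $\{s_x\}_{x\in\bar G}$ is a subgroup of order $n$ complementing $P$.

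The only delicate point is the bookkeeping in the base case: one must track the $\bar G$-action on $P$ correctly through the averaging step and invoke coprimality to extract $n$-th roots. The inductive reduction via $Z(P)$ is entirely routine. Of course, since $\gcd(|P|,|G:P|)=1$, one may alternatively just cite the general Schur--Zassenhaus theorem, which is the route taken here.
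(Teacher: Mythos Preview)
Your argument is correct: the reduction via $Z(P)$ to the abelian case followed by the standard factor-set averaging is exactly the classical proof of the existence half of Schur--Zassenhaus, and the coprimality of $|P|$ and $n$ is precisely what makes the $n$-th root extraction work. The only cosmetic issue is the action convention in the cocycle identity (with $a^{x}$ usually denoting $t_{x}^{-1}at_{x}$, the identity comes out as $f(x,y)f(xy,z)={}^{x}f(y,z)\,f(x,yz)$), but since $P$ is abelian this has no effect on the averaging step, and you wisely hedged with ``a relation of the shape''.

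As for comparison with the paper: there is nothing to compare. The lemma is stated with a bare citation to Gorenstein and no proof is given; it is used purely as a black box in the later arguments. Your write-up therefore supplies strictly more than the paper does, and your closing remark already anticipates this.
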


\begin{lemma}\label{cyclic}\emph{(\cite[Theorem $7.6.1$]{gor}).}
If a $2$-Sylow subgroup of $G$ is cyclic then $G$ is $2$-nilpotent.
\end{lemma}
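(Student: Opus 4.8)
The plan is to deduce the lemma from Burnside's normal $p$-complement theorem, which states that if a Sylow $p$-subgroup $P$ of $G$ satisfies $P\leq Z(N_G(P))$, then $G$ has a normal $p$-complement. Taking $p=2$, a normal $2$-complement is precisely a normal Hall $2'$-subgroup whose quotient is the $2$-group $P$, so its existence is exactly the assertion that $G$ is $2$-nilpotent. Thus the entire task reduces to verifying the single hypothesis $P\leq Z(N_G(P))$, equivalently $N_G(P)=C_G(P)$, under the assumption that the Sylow $2$-subgroup $P$ is cyclic.

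To check this I would argue by a parity count. Since $P$ is cyclic it is abelian, so $P\leq C_G(P)$; as $P$ is a Sylow $2$-subgroup of $G$ it is also a Sylow $2$-subgroup of $N_G(P)$, and it already lies inside $C_G(P)$, so the index $[N_G(P):C_G(P)]$ is odd. On the other hand, the $N/C$-theorem embeds $N_G(P)/C_G(P)$ into $\mathrm{Aut}(P)$, and for $P\cong \mathbb{Z}/2^{n}\mathbb{Z}$ one has $|\mathrm{Aut}(P)|=\varphi(2^{n})=2^{n-1}$, a power of $2$. Hence $N_G(P)/C_G(P)$ is simultaneously a $2$-group and of odd order, which forces $N_G(P)=C_G(P)$. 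The degenerate case $P=1$ (that is, $|G|$ odd) is vacuous, since $G$ is then its own $2$-complement.

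With $N_G(P)=C_G(P)$ established, so that $P\leq Z(N_G(P))$, Burnside's theorem supplies the normal $2$-complement and the lemma follows. I expect essentially no obstacle in the cyclic reduction itself: the crucial arithmetic input is merely that the automorphism group of a cyclic $2$-group is again a $2$-group, after which the odd-versus-even parity collision does all the work. The genuine content sits inside Burnside's normal $p$-complement theorem, whose proof rests on the transfer homomorphism and the control of fusion afforded by an abelian (here cyclic) Sylow subgroup. If one wished to avoid quoting it, the alternative would be an induction on $|G|$, peeling off normal subgroups and invoking the Schur--Zassenhaus splitting of Lemma \ref{SZ} at the base step where $P$ becomes normal; but appealing to Burnside directly is by far the cleanest route.
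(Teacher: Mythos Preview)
Your proof is correct. Note, however, that the paper does not actually supply its own proof of this lemma: it merely cites \cite[Theorem~$7.6.1$]{gor} and moves on. So there is no ``paper's proof'' to compare against beyond the reference itself.

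That said, the argument you give via Burnside's normal $p$-complement theorem is precisely the standard one, and is essentially the proof Gorenstein gives at the cited location (his Theorem~$7.6.1$ treats the slightly more general case where $p$ is the smallest prime dividing $|G|$ and the Sylow $p$-subgroup is cyclic; the key input is still that $\mathrm{Aut}(\mathbb{Z}/p^{n}\mathbb{Z})$ has order coprime to any prime smaller than $p$, forcing $N_G(P)=C_G(P)$ and then invoking Burnside). Your parity argument specialized to $p=2$ is exactly this, so your proposal aligns with the cited source.
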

\begin{lemma}\label{max}\emph{(\cite[Corollary $1.2$]{Asa1}).}
Let $P$ be a $2$-Sylow subgroup of $G.$ Then $G$ is $2$-nilpotent if
and only if every maximal subgroup of $P$ is strongly closed in $G.$
\end{lemma}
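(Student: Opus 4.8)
The plan is to deduce this biconditional from Asaad's criterion \cite[Theorem $1.1$]{Asa1}, which states that $G$ is $2$-nilpotent if and only if every maximal subgroup of $P$ is strongly closed in $G$ \emph{and} $N_G(P)$ is $2$-nilpotent. Granting that, the forward implication is immediate; for a self-contained check one can instead take the normal $2$-complement $N$ of the $2$-nilpotent group $G$, so that the projection $\pi\colon G\to G/N$ restricts to an isomorphism $\pi|_P\colon P\to G/N$. Each maximal subgroup $H$ of $P$ then has image $\pi(H)$ of index $2$, hence normal, in $G/N$, and strong closure of $H$ follows because $a\in H$ and $a^g\in P$ force $\pi(a^g)=\pi(a)^{\pi(g)}\in\pi(H)$, whence $a^g\in H$ by injectivity of $\pi|_P$. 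Thus the real content is the reverse implication, and by \cite[Theorem $1.1$]{Asa1} it suffices to show that the strong-closure hypothesis \emph{alone} forces $N_G(P)$ to be $2$-nilpotent.

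So assume every maximal subgroup of $P$ is strongly closed in $G$. By Schur--Zassenhaus (Lemma \ref{SZ}) write $N_G(P)=P\rtimes K$ with $K$ a Hall $2'$-subgroup. If $H$ is a maximal subgroup of $P$ and $g\in K$, then $g$ normalizes $P$, so $H^g\leq P$; strong closure then gives $H^g\leq H$, hence $H^g=H$. Therefore $K$ normalizes every maximal subgroup of $P$. Passing to the quotient $V=P/\Phi(P)$, viewed as a vector space over $\mathbb{F}_2$, the group $K$ acts linearly and stabilizes every hyperplane of $V$. A linear operator fixing every hyperplane is a scalar, and over $\mathbb{F}_2$ the only nonzero scalar is $1$; hence $K$ acts trivially on $V=P/\Phi(P)$. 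Since $|K|$ is odd, coprime action shows that an automorphism trivial on $P/\Phi(P)$ is trivial on $P$, so $K\leq C_G(P)$ and $N_G(P)=P\times K$ is $2$-nilpotent. Asaad's criterion now yields that $G$ is $2$-nilpotent. (If instead $\dim_{\mathbb{F}_2}V\leq 1$, then $P$ is cyclic and $G$ is $2$-nilpotent by Lemma \ref{cyclic}.)

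The main obstacle is exactly the step converting the strong-closure hypothesis into $2$-nilpotence of $N_G(P)$, and the decisive point is special to the prime $2$: over $\mathbb{F}_2$ an operator stabilizing all hyperplanes must be the identity, whereas over $\mathbb{F}_p$ for odd $p$ it may be a nontrivial scalar. This is precisely why \cite[Theorem $1.1$]{Asa1} must retain the hypothesis that $N_G(P)$ be $p$-nilpotent for general $p$, while for $p=2$ that hypothesis becomes redundant and can be dropped, giving the present lemma.
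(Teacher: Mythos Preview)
Your argument is correct. Note, however, that the paper does not supply its own proof of this lemma: it is quoted verbatim as \cite[Corollary~1.2]{Asa1} and used as a black box. What you have written is therefore not a comparison target but a reconstruction of why Asaad's corollary follows from his Theorem~1.1 in the special case $p=2$.

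Your reduction is the natural one. The forward implication is indeed immediate from the ``if and only if'' in \cite[Theorem~1.1]{Asa1}; your direct check via the projection $\pi\colon G\to G/N$ verifies strong closure of $H$ in $P$ with respect to $G$, which for a maximal subgroup $H\trianglelefteq P$ is equivalent to the paper's definition (strong closure in $N_G(H)$), since any $2$-element of $N_G(H)$ is $N_G(H)$-conjugate into $P$. For the reverse implication your key step---that an $\mathbb{F}_2$-linear map stabilizing every hyperplane is the identity, hence $K$ acts trivially on $P/\Phi(P)$ and, by coprime action (Burnside), on $P$---is exactly the phenomenon that makes the extra hypothesis ``$N_G(P)$ is $p$-nilpotent'' in \cite[Theorem~1.1]{Asa1} redundant at $p=2$. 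This is presumably how Asaad himself derives Corollary~1.2 from Theorem~1.1, so your proof is both correct and in the intended spirit.
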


\begin{lemma}\label{quot} Suppose that $H$ is a strongly closed $p$-subgroup of $G.$

$(a)$ If $H\leq L\leq G$ then $H$ is strongly closed in $L;$

$(b)$ If $\bar{G}$ is a homomorphic image of $G,$ then $\bar{H}$  is strongly closed in $\bar{G}$ and $N_{\bar{G}}(\bar{H})=\overline{N_G(H)};$

$(c)$ If $H$ is subnormal in $G$ then $H\unlhd G.$
\end{lemma}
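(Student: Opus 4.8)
The plan is to prove the three parts in the order (a), (c), (b), since the proof of (b) will draw on both earlier parts. Throughout I use that ``$H$ is strongly closed in $G$'' means $H$ is strongly closed in $N_G(H)$ with respect to $G$, equivalently $H^g\cap N_G(H)\le H$ for all $g\in G$, and that a conjugate $H^g$ of a strongly closed $p$-subgroup is again strongly closed in $G$. Part (a) is immediate: if $H\le L\le G$ then $N_L(H)=N_G(H)\cap L\le N_G(H)$, so whenever $a\in H$ and $g\in L$ satisfy $a^g\in N_L(H)$ we have $a^g\in N_G(H)$ with $g\in G$, and strong closure in $G$ forces $a^g\in H$; hence $H$ is strongly closed in $N_L(H)$ with respect to $L$.

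For part (c) the engine is the defect-two case. If $H\unlhd M\unlhd G$, then for any $g\in G$ we get $H^g\le M^g=M$, and $H\unlhd M$ gives $M\le N_G(H)$; thus every conjugate $H^g$ lies in $N_G(H)$, so strong closure forces $H^g\le H$, i.e. $H^g=H$, and $H\unlhd G$. For a general subnormal chain $H=H_0\unlhd H_1\unlhd\cdots\unlhd H_n=G$ I would induct on the defect $n$: by part (a), $H$ is strongly closed in $H_{n-1}$, so the inductive statement applied inside $H_{n-1}$ gives $H\unlhd H_{n-1}$, and then the defect-two case applied to $H\unlhd H_{n-1}\unlhd G$ yields $H\unlhd G$.

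For part (b), write $\bar G=G/N$ with $N\unlhd G$ (the kernel of the given epimorphism), so $\bar H=HN/N$; put $L:=HN$. The inclusion $\overline{N_G(H)}\le N_{\bar G}(\bar H)$ is clear, and both remaining assertions rest on a single device. If $g\in G$ satisfies $H^g\le L$ (which happens exactly when $\bar g\in N_{\bar G}(\bar H)$), then by order bookkeeping $H^gN=HN=L$, so $L^g=L$ and $g\in N_G(L)=:\tilde G$. The natural approach, a Frattini argument, would succeed if $H$ were a Sylow $p$-subgroup of $L$; but $H\cap N$ need not be Sylow in $N$, so $H$ need not be Sylow in $L$, and \emph{this is the main obstacle}. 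The way around it is to invoke part (c): for a suitable Sylow $p$-subgroup $P$ of $\tilde G$ one conjugates a copy of $H$ into $P$ by some $l\in L$, and then $H^l$ is strongly closed in $P$ (by conjugacy and part (a)) and subnormal in $P$ (as $P$ is a $p$-group), so part (c) gives $H^l\unlhd P$ and hence $P\le N_G(H^l)$, putting us in position to apply strong closure of $H^l$ to conjugates that land in $P$.

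Concretely, to prove $N_{\bar G}(\bar H)\le\overline{N_G(H)}$ I take $g$ with $H^g\le L$, pick $P\in Syl_p(L)$ with $H\le P$, and use Sylow conjugacy in $L$ to find $l\in L$ with $H^{gl^{-1}}\le P$; since $H\unlhd P$ by (c), strong closure forces $H^{gl^{-1}}=H$, whence $g\in N_G(H)\,L=N_G(H)\,N$, giving $\bar g\in\overline{N_G(H)}$. Likewise, to prove $\bar H$ is strongly closed I take $a\in H$ and $g\in G$ with $\overline{a^g}\in N_{\bar G}(\bar H)$; then $a^g\in N_G(L)$, I place the $p$-element $a^g$ in a Sylow $p$-subgroup $P$ of $N_G(L)$ and conjugate $H$ into $P$ by some $l\in L$ so that $H^l\unlhd P$ by (c). Writing $a^g=(a^l)^{\,l^{-1}g}$ exhibits $a^g$ as a conjugate of $a^l\in H^l$ lying in $P\le N_G(H^l)$, so strong closure of $H^l$ gives $a^g\in H^l\le HN$, i.e. $\overline{a^g}\in\bar H$. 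The only delicate points left to verify are the order computation yielding $H^gN=HN$ from $H^g\le HN$, and that the passage to $\tilde G=N_G(L)$ retains the Sylow conjugacy used to move $H$ into $P$.
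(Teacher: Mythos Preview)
Your proof is correct; the two ``delicate points'' you flag are routine (for the order computation use $H^g\cap N=(H\cap N)^g$, and for the Sylow step note that $L\unlhd\tilde G$ forces $P\cap L\in Syl_p(L)$ for any $P\in Syl_p(\tilde G)$). The paper, however, does not prove this lemma at all: it simply cites (a) and (c) from Bianchi--Mauri--Herzog--Verardi and (b) from Goldschmidt. So your self-contained write-up supplies strictly more than the paper does. The device you use for (b)---replacing a Frattini argument (which fails because $H$ need not be Sylow in $HN$) by the observation that $H$, being strongly closed and subnormal in any $p$-overgroup $P$, is normal in $P$ by (c), so that strong closure can be applied inside $P$---is essentially the mechanism behind Goldschmidt's original argument, so you have reconstructed the literature proof rather than found a genuinely new route. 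One small streamlining: for the normalizer inclusion $N_{\bar G}(\bar H)\le\overline{N_G(H)}$ you only need $P\in Syl_p(L)$, as you in fact use; the passage to $\tilde G=N_G(L)$ is required only in the strong-closure half, where $a^g$ may lie outside $L$.
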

\begin{proof} $(a)$ is \cite[Lemma $7(2)$]{Bia} and $(c)$ is \cite[Theorem $6(2)$]{Bia}. Finally $(b)$ is \cite[$(2.2)(a)$]{Gold}.
\end{proof}

\begin{lemma}\label{Syl}\emph{(\cite[Corollary B$3$]{Gold}).} Suppose that $H$ is a strongly closed $2$-subgroup of $G$ and $N_G(H)/C_G(H)$
is a $2$-group. Then $H\in Syl_2(\la H^G\ra).$
\end{lemma}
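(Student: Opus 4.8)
The plan is to reduce the statement to the single assertion that the normal closure $N:=\la H^G\ra$ is $2$-nilpotent, and then to extract $H\in Syl_2(N)$ from this by a short argument built on Lemma \ref{quot}. It is worth isolating this easy direction first. Put $K:=O_{2'}(N)$; since $K$ is characteristic in $N$ and $N\unlhd G$, we have $K\unlhd G$, so we may pass to $\bar G:=G/K$. By Lemma \ref{quot}$(b)$, $\bar H$ is strongly closed in $\bar G$ and $N_{\bar G}(\bar H)=\overline{N_G(H)}$, so $N_{\bar G}(\bar H)/C_{\bar G}(\bar H)$ is a quotient of the $2$-group $N_G(H)/C_G(H)$ and is again a $2$-group; moreover $\bar N=\la\bar H^{\bar G}\ra$. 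If $N$ is $2$-nilpotent, then $\bar N=N/K$ is a $2$-group, so $\bar H\le\bar N\unlhd\bar G$ with $\bar N$ a $2$-group forces $\bar H$ to be subnormal in $\bar N$ (every subgroup of a $2$-group is subnormal), hence subnormal in $\bar G$; Lemma \ref{quot}$(c)$ then gives $\bar H\unlhd\bar G$, whence $\bar N=\la\bar H^{\bar G}\ra=\bar H$. This yields $N=HK$, and as $|K|$ is odd we conclude $H\in Syl_2(N)$.

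The core of the matter is therefore to prove that $N=\la H^G\ra$ is $2$-nilpotent, and here I would argue by minimal counterexample. Replacing $G$ by $N$ is legitimate because $H$ stays strongly closed in $N$ by Lemma \ref{quot}$(a)$ and the automizer condition is inherited, since $N_N(H)/C_N(H)$ embeds in $N_G(H)/C_G(H)$; thus I may assume $G=\la H^G\ra$ and, after quotienting out $O_{2'}(G)$, that $O_{2'}(G)=1$, reducing as usual to the case where $G$ has a unique minimal normal subgroup and $H$ is noncentral. The next step is to replace $H$ by an abelian strongly closed $2$-subgroup — for instance a suitable characteristic subgroup such as $\Omega_1(Z(H))$, after checking that it too is strongly closed under the present hypotheses — in order to bring Goldschmidt's classification of groups with an abelian strongly closed $2$-subgroup \cite{G74} to bear. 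That classification describes $\la A^G\ra$ modulo its largest odd-order normal subgroup as generated by the image of $A$ together with simple groups having abelian Sylow $2$-subgroups, drawn from Walter's explicit list ($\mathrm{PSL}_2(q)$, $\mathrm{Sz}(q)$, $^2G_2(q)$, $J_1$, and the like).

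The hard part, and exactly the place where the hypothesis that $N_G(H)/C_G(H)$ is a $2$-group becomes indispensable, is excluding every one of these nonabelian simple sections. In each group on the list, the relevant strongly closed $2$-subgroup has a normalizer inducing automorphisms of \emph{odd} order on it, so its automizer is not a $2$-group; the standing assumption therefore rules all of them out. The prototype to keep in mind is $G=\mathrm{PSL}_2(q)$ with $q\equiv\pm3\pmod 8$: its Sylow $2$-subgroup is a strongly closed Klein four-group $V$, yet $N_G(V)/C_G(V)\cong C_3$ and $\la V^G\ra=G$ is simple, hence not $2$-nilpotent. Once all simple sections are excluded, $G=\la H^G\ra$ possesses a normal $2$-complement, which closes the induction and, combined with the first paragraph, yields $H\in Syl_2(\la H^G\ra)$.

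I expect this exclusion step to be the genuine obstacle. It is precisely the content of Goldschmidt's Corollary B$3$ and cannot be reached from the elementary lemmas quoted here alone, as it rests on the deep $2$-fusion analysis underlying \cite{G74} (itself a descendant of the Glauberman $Z^*$-theorem). Everything else — the passage from $2$-nilpotence of the normal closure to $H\in Syl_2(\la H^G\ra)$, and the routine reductions of the minimal counterexample — is elementary and uses only Lemma \ref{quot}.
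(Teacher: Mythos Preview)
The paper does not prove this lemma at all: it is quoted verbatim as \cite[Corollary~B3]{Gold} and used as a black box. So there is no ``paper's own proof'' to compare against; your proposal is really an attempt to reconstruct Goldschmidt's argument, and you yourself recognise in the last paragraph that the substance lies entirely in \cite{G74,Gold}.

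That said, your sketch has one genuine soft spot worth flagging. The reduction in your first paragraph---from ``$\la H^G\ra$ is $2$-nilpotent'' to ``$H\in Syl_2(\la H^G\ra)$'' via Lemma~\ref{quot}---is clean and correct. The weak step is the move ``replace $H$ by an abelian strongly closed $2$-subgroup, for instance $\Omega_1(Z(H))$''. Characteristic subgroups of a strongly closed subgroup are \emph{not} in general strongly closed: strong closure is a fusion condition relative to $G$, not an intrinsic property of $H$, so there is no reason an element of $\Omega_1(Z(H))$ conjugated into a Sylow $2$-subgroup must land back in $\Omega_1(Z(H))$ rather than merely in $H$. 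Your parenthetical ``after checking that it too is strongly closed under the present hypotheses'' is precisely the point that needs an argument, and the automizer hypothesis alone does not supply one. Goldschmidt's actual reduction in \cite{Gold} from the general case to the abelian case of \cite{G74} is considerably more delicate than passing to $\Omega_1(Z(H))$; it goes through his structure theorem for $\la H^G\ra/O_{2'}(\la H^G\ra)$ and an analysis of the components that can occur.

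So: your first paragraph is a correct and useful observation; your outline of the core is in the right spirit (reduce, then invoke the classification of simple groups with an abelian strongly closed $2$-subgroup, then exclude them via the automizer condition), but the bridge you propose between the nonabelian and abelian cases is not valid as stated. Since the paper is content to cite the result, the honest thing to do here is the same.
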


\begin{lemma}\label{min}\emph{(\cite[Satz $4.5.5$]{Hup}).} If every element of order $2$ and $4$ of $G$ are central then $G$ is $2$-nilpotent.
\end{lemma}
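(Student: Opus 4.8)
The plan is to argue by a minimal counterexample and reduce to the classification of minimal non-$2$-nilpotent groups. Suppose the lemma fails, and let $G$ be a counterexample of least order: every element of $G$ of order $2$ or $4$ lies in $Z(G)$, yet $G$ is not $2$-nilpotent. First I would observe that the hypothesis is inherited by every subgroup $H\leq G$: an element of $H$ of order $2$ or $4$ has the same order in $G$, hence lies in $Z(G)$ and so commutes with every element of $H$, i.e.\ lies in $Z(H)$. By minimality of $|G|$, every proper subgroup of $G$ is therefore $2$-nilpotent, so $G$ is a minimal non-$2$-nilpotent group.

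Next I would invoke It\^o's structure theorem for such groups: a group that is not $2$-nilpotent but all of whose proper subgroups are $2$-nilpotent is a minimal non-nilpotent (Schmidt) group, and in particular $G=P\rtimes Q$, where $P\in Syl_2(G)$ is normal, $Q$ is cyclic of odd prime-power order, and the exponent of $P$ divides $4$. This exponent bound on the normal Sylow $2$-subgroup is the crucial input, and it is where the hypothesis will finally bite.

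Finally, since $\exp(P)\mid 4$, every nontrivial element of $P$ has order $2$ or $4$ and so, by hypothesis, lies in $Z(G)$; hence $P\leq Z(G)$. But then $Q$ centralizes $P$, so $G=P\times Q$ is nilpotent and in particular $2$-nilpotent, contradicting the choice of $G$. The only real obstacle is the structural input in the middle step: one must know that a minimal non-$2$-nilpotent group has a normal Sylow $2$-subgroup of exponent at most $4$, which is exactly the classical Schmidt--It\^o description. Granting this, the remainder of the argument is immediate.
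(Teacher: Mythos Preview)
Your argument is correct and is the standard one. The paper does not give its own proof of this lemma but simply cites Huppert, \emph{Endliche Gruppen~I}, Satz~IV.5.5; the proof there follows exactly the route you outline: the hypothesis is inherited by subgroups, so a minimal counterexample is a minimal non-$2$-nilpotent group, which by It\^o's theorem (Huppert, Satz~IV.5.4) is a Schmidt group $G=P\rtimes Q$ with normal Sylow $2$-subgroup $P$ of exponent dividing~$4$, and then the hypothesis forces $P\leq Z(G)$, a contradiction.
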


\begin{lemma}\label{maxnil}\emph{(\cite[Baumann]{Bau}).}
If $G$ is a non-abelian simple group in which a $2$-Sylow subgroup
of $G$ is maximal, then $G$ is isomorphic to $L_2(q),$ where $q$ is
a prime number of the form $2^m\pm 1\geq 17.$
\end{lemma}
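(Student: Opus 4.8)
The plan is to recognise $G$ as a simple group possessing a self-normalizing nilpotent maximal subgroup, namely $P$, and to funnel the problem into the classification of simple groups with dihedral Sylow $2$-subgroups. First I would record that $N_G(P)=P$: since $P$ is maximal, either $N_G(P)=P$ or $N_G(P)=G$, and the latter would make $P$ normal, contradicting the simplicity of $G$ (as $1\neq P\neq G$). Consequently $C_G(P)=Z(P)$ and $N_G(P)/C_G(P)=P/Z(P)$ is a $2$-group, so the hypotheses of Lemma \ref{Syl} are in force; more usefully, $G$ now acts faithfully and primitively on the coset space $G/P$, of odd degree $[G:P]$, with point stabiliser the full Sylow $2$-subgroup $P$. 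A direct coset computation shows that $P$ fixes exactly one point (a second fixed coset $gP$ would force $P=P^g$ and hence $g\in N_G(P)=P$), which is the structural feature I would exploit in the fusion analysis below.

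The heart of the argument, and the step I expect to be the main obstacle, is to prove that $P$ is \emph{dihedral}. Because $P$ is self-normalizing, the fusion of $2$-elements is tightly constrained: Alperin's fusion theorem localises all fusion to subgroups of the form $N_G(Q)$ with $Q\le P$, while Glauberman's $Z^*$-theorem, applied to the simple group $G$ where $Z^*(G)=1$, forbids any involution of $P$ from being isolated and thereby forces genuine $G$-conjugacy among involutions inside $P$. Combining these restrictions with the unique-fixed-point action on $G/P$ of odd degree, one aims to bound the $2$-local structure of $G$ sharply enough to conclude that $P$ possesses a cyclic subgroup of index $2$ and is non-abelian, i.e. that $P$ is dihedral. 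This is precisely the substantive content of Baumann's analysis; carrying it out rigorously is the delicate part, while everything surrounding it is comparatively formal. Note that the degenerate possibilities are harmless here: if $P$ were cyclic then $G$ would be $2$-nilpotent by Lemma \ref{cyclic}, contradicting non-abelian simplicity.

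Once $P$ is known to be dihedral I would invoke the Gorenstein--Walter classification of finite simple groups with dihedral Sylow $2$-subgroups, which yields $G\cong L_2(q)$ for some odd $q\ge 5$ or $G\cong A_7$. The case $G\cong A_7$ is discarded at once, since a Sylow $2$-subgroup $D_8$ of $A_7$ lies in a larger subgroup and is not maximal. For $G\cong L_2(q)$, Dickson's description of the maximal subgroups supplies the final bookkeeping: the dihedral Sylow $2$-subgroup has order equal to the full $2$-part of $(q-1)(q+1)/2$, and it can be maximal only when it exhausts one of the dihedral subgroups of order $q\pm1$, which happens exactly when $q\mp 1$ is odd, i.e. when $q=2^m\pm 1$. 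The small values $q\in\{3,5,7,9\}$ of this form are excluded because there a Sylow $2$-subgroup embeds properly into a copy of $A_4$ or $S_4$ (and $L_2(3)\cong A_4$ is not simple), which leaves $q\ge 17$; finally, the elementary fact that a prime power of the form $2^m\pm1$ with $q\ge 17$ is necessarily prime forces $q$ to be a prime of the form $2^m\pm 1\ge 17$, as claimed.
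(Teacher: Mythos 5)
There is a genuine gap, and you name it yourself: the entire middle step --- proving that the Sylow $2$-subgroup $P$ is dihedral --- contains no argument. You write that Alperin's fusion theorem plus Glauberman's $Z^*$-theorem, combined with the unique-fixed-point action on $G/P$, should ``bound the $2$-local structure sharply enough,'' and then concede that ``this is precisely the substantive content of Baumann's analysis.'' That concession makes the proposal circular: the statement being proved \emph{is} Baumann's theorem, and its proof (a long paper in J.\ Algebra 38 (1976), building on Thompson's odd-order nilpotent-maximal-subgroup theorem and Janko's class-$\le 2$ case) consists essentially of the local analysis you are outsourcing. The tools you invoke do not come close on their own: $Z^*(G)=1$ only tells you no involution is isolated, and Alperin's theorem only localizes fusion to normalizers $N_G(Q)$ with $Q\le P$; the easy observation available here (that $N_G(Q)=P$ for every $1\neq Q\unlhd P$, since $P\le N_G(Q)<G$ and $P$ is maximal) controls only the locals \emph{containing} $P$, and says nothing about normalizers of non-normal subgroups of $P$, which is where the real work lies. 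There is no visible route from these generalities to ``$P$ has a cyclic subgroup of index $2$.'' Moreover, even granting that, your inference ``cyclic subgroup of index $2$ and non-abelian, i.e.\ $P$ is dihedral'' is false as stated: such a $2$-group can also be generalized quaternion, semidihedral, or modular, and eliminating these requires further nontrivial input (Brauer--Suzuki for quaternion; Alperin--Brauer--Gorenstein for semidihedral; a $2$-nilpotence argument for modular).

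For comparison: the paper does not prove this lemma at all --- it is quoted verbatim as Baumann's theorem, with \cite{Bau} as the proof, which is the appropriate treatment for a result of this depth. Your surrounding material is fine and matches what a correct endgame would look like: $N_G(P)=P$ by maximality and simplicity; the Gorenstein--Walter classification reducing a dihedral-Sylow simple group to $L_2(q)$ ($q$ odd) or $A_7$; discarding $A_7$ since its $D_8$ Sylow sits inside a larger subgroup; and the Dickson bookkeeping in $L_2(q)$, including the observation that a prime power $2^m\pm 1\ge 17$ is necessarily prime. Two small slips there: the maximality condition should read that $q\mp 1$ has $2$-part exactly $2$ (equivalently $(q\mp 1)/2$ is odd), not that ``$q\mp 1$ is odd,'' which never happens for odd $q$; and for Sylow $2$-subgroups of order $8$ one must also exclude containment in $S_4$-subgroups, which exist in $L_2(q)$ precisely when $q\equiv\pm 1 \pmod 8$. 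But these are cosmetic next to the main issue: as written, the proposal is a plausible roadmap, not a proof, because its pivotal step is asserted rather than argued.
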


A \emph{component} of $G$ is a subnormal quasisimple subgroup of
$G.$ Denote by $E(G)$ the subgroup of $G$ generated by all
components of $G.$ Then the \emph{generalized Fitting subgroup}
$F^*(G)$  of $G$ is a central product of $E(G)$ and the Fitting
subgroup $F(G)$ of $G.$
\begin{lemma}\label{genfit}\emph{(\cite[Theorem $9.8$]{Isa}).} $C_G(F^*(G))\leq F^*(G).$
\end{lemma}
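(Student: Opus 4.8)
The plan is to prove \(C_G(F^{*}(G))\le F^{*}(G)\) by induction on \(|G|\), writing \(F=F(G)\), \(E=E(G)\), \(F^{*}=F^{*}(G)\) and \(C=C_G(F^{*})\). Since \(F^{*}\unlhd G\), its centralizer \(C\) is normal in \(G\). I will use two standard facts that flow from the definitions above: components are subnormal and quasisimple, and a characteristic (hence also a derived) subgroup of a subnormal subgroup is again subnormal; and components centralize the Fitting subgroup, so \([E,F]=1\) and \(F^{*}=EF\) is a central product. From \([E,F]=1\) one gets \(Z(F^{*})=Z(F)\): indeed \(Z(F)\) centralizes both \(F\) and \(E\) and lies in \(F^{*}\), giving \(Z(F)\le Z(F^{*})\), while \(Z(F^{*})\) is an abelian normal subgroup of \(G\), hence lies in \(F\) and centralizes \(F\), giving \(Z(F^{*})\le Z(F)\). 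Consequently \(C\cap F^{*}=C_{F^{*}}(F^{*})=Z(F)\).

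The heart of the argument is to compute \(F^{*}(C)\). First I claim \(E(C)=1\): any component \(L\) of \(C\) is subnormal and quasisimple in \(C\), hence subnormal in \(G\) (as \(C\unlhd G\)), so \(L\) is a component of \(G\) and \(L\le E\); but \(L\le C\) centralizes \(E\supseteq L\), forcing \(L\) abelian and contradicting quasisimplicity. Next I claim \(F(C)=Z(F)\): the subgroup \(F(C)\) is characteristic in \(C\unlhd G\), hence nilpotent and normal in \(G\), so \(F(C)\le F\); since \(F(C)\le C\) this yields \(F(C)\le C\cap F\le Z(F)\), and conversely \(Z(F)\le C\) is abelian and normal in \(C\), so \(Z(F)\le F(C)\). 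Therefore \(F^{*}(C)=E(C)F(C)=Z(F)\), which is abelian.

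The closing step is the observation that \(F^{*}(C)=Z(F)\le F^{*}\) is \emph{centralized} by \(C\); thus \(C\le C_C(F^{*}(C))\), i.e.\ \(C_C(F^{*}(C))=C\). Assume first \(C<G\). Applying the induction hypothesis to \(C\) gives \(C_C(F^{*}(C))\le F^{*}(C)=Z(F)\), whence \(C\le Z(F)\le F^{*}\), as wanted. It remains to treat \(C=G\): then \(F^{*}\le Z(G)\), and since \(Z(G)\le F\le F^{*}\) always holds this forces \(F^{*}=F=Z(G)\) and \(E=1\), and I claim \(G\) is abelian (so \(C=G=F^{*}\)). If not, the solvable radical \(R\) satisfies \(F(R)=F=Z(G)\), so the solvable Fitting theorem gives \(R=C_R(Z(G))=C_R(F(R))\le F(R)=Z(G)\), i.e.\ \(R=Z(G)\); then \(G/Z(G)\neq1\) has trivial solvable radical, so its socle is a nontrivial product of nonabelian simple groups, and the preimage in \(G\) of such a simple direct factor contains a perfect subnormal subgroup that is quasisimple, producing a component of \(G\) and contradicting \(E=1\).

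The main obstacle, and where care is needed, is the bookkeeping around components: verifying that components of the normal subgroup \(C\) are genuinely components of \(G\), together with the two structural facts \([E,F]=1\) and \(Z(F^{*})=Z(F)\) on which the computation of \(F^{*}(C)\) rests. Once these are in place, the inductive step is immediate from the single observation that \(C\) centralizes its own generalized Fitting subgroup \(F^{*}(C)=Z(F)\); the only genuinely separate work is the degenerate base case \(C=G\), where one must rule out a nonabelian \(G\) with central Fitting subgroup and no components.
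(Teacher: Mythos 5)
Your proof is correct, but note that the paper itself contains no proof of this lemma: it is quoted as Theorem 9.8 of Isaacs' \emph{Finite Group Theory}, so the natural comparison is with the standard textbook argument. Your core computations are exactly the classical ones: granting the prior structural facts that $[E(G),F(G)]=1$ and that the components of $G$ are precisely its quasisimple subnormal subgroups (both of which precede Theorem 9.8 in Isaacs, so there is no circularity), you correctly derive $C\cap F^{*}=Z(F^{*})=Z(F)$, then $E(C)=1$ and $F(C)=Z(F)$, hence $F^{*}(C)=Z(F)$. Where you genuinely diverge is the wrapper: Isaacs argues directly, taking a chief factor $D/Z(F)$ of $G$ inside $C/Z(F)$ and ruling out both the abelian case (via $D\leq F$) and the nonabelian case (via components), whereas you induct on $|G|$, applying the statement to the proper normal subgroup $C$ --- legitimate, since you observe that $C$ centralizes its own $F^{*}(C)$ --- and treating $C=G$ separately. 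The inductive route buys a very short main step at the cost of the degenerate case, and that case is the only place your write-up is compressed: the claim that the preimage $H$ of a simple direct factor of the socle of $G/Z(G)$ contains a quasisimple subnormal subgroup deserves one more line. Since $H/Z(G)$ is perfect, $H=H'Z(G)$, whence $H'=(H'Z(G))'=H''$, so $L=H'$ is perfect and subnormal in $G$ (characteristic in the subnormal $H$); moreover $L\cap Z(G)=Z(L)$ (a proper normal subgroup of the simple quotient $L/(L\cap Z(G))$, proper because $L$ is perfect and nontrivial), so $L/Z(L)\cong H/Z(G)$ is simple and $L$ is a component, contradicting $E(G)=1$. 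Your appeal to Fitting's theorem for the solvable radical is likewise non-circular, since the solvable case has its own classical proof. With that one line filled in the argument is complete; in fact it yields the sharper conclusion $C_G(F^{*}(G))=Z(F(G))$, since in the inductive case $C\leq Z(F)\leq C$ and in the degenerate case $C=G=Z(G)=Z(F)$.
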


\begin{lemma}\label{four}\emph{(\cite[Problem $4$D$.4,$ p. $146$]{Isa}).}
Let $A$ act via automorphisms on a $2$-group $P,$ where $|A|$ is
odd. If $A$ centralizes every element of order $2$ and $4$ in $P,$
then $A$ acts trivially on $P.$
\end{lemma}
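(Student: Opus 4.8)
The plan is to prove the statement in the form: assuming $A$ centralizes every element of $P$ of order $2$ and $4$, show $[P,A]=1$. The first observation is that the elements of order dividing $4$ generate the characteristic subgroup $\Omega_2(P)=\la x\in P:x^4=1\ra$, so the hypothesis says precisely that $A$ centralizes $\Omega_2(P)$. Since $|A|$ is odd and $P$ is a $2$-group, the action is coprime, and I would rely throughout on the standard coprime-action facts: the decomposition $P=[P,A]\,C_P(A)$, the identity $[P,A,A]=[P,A]$, and the passage of fixed points to quotients, $C_{P/N}(A)=C_P(A)N/N$ for every $A$-invariant $N\unlhd P$.

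The main reduction is to the Frattini quotient. By Burnside's basis theorem the kernel of the restriction map $\mathrm{Aut}(P)\to\mathrm{Aut}(P/\Phi(P))$ is a $2$-group; as the image of $A$ in $\mathrm{Aut}(P)$ has odd order, it meets this kernel trivially, so it suffices to prove that $A$ acts trivially on $\bar P:=P/\Phi(P)$. I would argue by contradiction together with induction on $|P|$. If $A$ moves a coset, choose $a\in A$ and $x\in P$ with $\bar x^{\,a}\neq\bar x$, that is, with $c:=[x,a]=x^{-1}x^{a}\notin\Phi(P)$. Note $x^2\in\Phi(P)$ since $\bar P$ is elementary abelian.

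The heart of the matter is a squaring computation that activates the order-$4$ hypothesis. When $x^2$ is centralized by $A$ we have $(x^{a})^2=(x^2)^{a}=x^2$, and writing $x^{a}=xc$ and expanding $(xc)^2=x^2$ yields $cxc=x$, hence $c^{x}=c^{-1}$. Thus $x$ inverts the nontrivial element $c\notin\Phi(P)$, and this inversion, combined with the fact that $A$ fixes every element of order at most $4$, is what I would push to a contradiction. The point is that for $p=2$ one genuinely needs order $4$ and not merely order $2$: in a (generalized) quaternion situation $\Omega_1(P)$ can collapse to the center, which is why the centraliser hypothesis is imposed up to order $4$.

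The main obstacle is precisely the requirement that $(x^2)^{a}=x^2$: this is guaranteed only when $x^2$ has order at most $4$, i.e. when $x$ has order at most $8$, whereas a priori $x$ may have larger order. I would resolve this by the inductive peeling of $Z:=\Omega_1(Z(P))$, which $A$ centralizes: by the coprime facts above, triviality on $P$ follows from triviality on $P/Z$, and for the latter the hypothesis transfers except in the single critical case of an element $y$ of order $8$ with $y^4\in Z$, where one must show $[y,a]\in Z$. Closing this one case by means of the identity $c^{x}=c^{-1}$ is the real work; alternatively I would reduce via Thompson's critical subgroup to a group of class at most $2$, where the squaring and commutator maps become (bi)linear on the Frattini quotient and the computation above can be carried out cleanly. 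I expect this control of order-$8$ elements to be the only delicate step, the rest being formal coprime-action bookkeeping.
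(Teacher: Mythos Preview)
The paper does not actually prove this lemma: it is quoted as Problem~4D.4 from Isaacs' textbook and stated without proof, so there is nothing to compare your argument to on the paper's side.

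As for your proposal itself, the framework is right (coprime action, Burnside's basis theorem, the identity $c^{x}=c^{-1}$ coming from $(x^{a})^{2}=x^{2}$), but you explicitly leave the decisive step open: you say that closing the order-$8$ case ``is the real work'' and that after a Thompson critical-subgroup reduction ``the computation\dots\ can be carried out cleanly,'' yet you do not carry it out. That is the gap. The $P/Z$ induction you sketch is genuinely awkward: for $y$ of order $8$ with $y^{4}\in Z$ you get $c^{y}=c^{-1}$ from $(y^{a})^{2}=y^{2}$, but without further structure there is no reason $c=[y,a]$ has order at most $4$, so you cannot invoke the hypothesis on $c$ and the induction stalls exactly where you predict.

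The critical-subgroup route, by contrast, does close, and here is the missing computation. Replace $P$ by a Thompson critical subgroup, so $\Phi(P)\le Z(P)$ and $P$ has class at most $2$. Since $Z(P)$ is abelian and $A$ centralizes its involutions, the coprime splitting $Z(P)=[Z(P),A]\times C_{Z(P)}(A)$ forces $[Z(P),A]=1$; in particular every square in $P$ is $A$-fixed. Now for any $x\in P$ and $a\in A$ set $c=[x,a]$. From $(x^{a})^{2}=x^{2}$ and the class-$2$ identity $(xc)^{2}=x^{2}c^{2}[c,x]$ you get $c^{2}=[x,c]$, and bilinearity of the commutator gives $[x,c]^{2}=[x^{2},c]=1$, hence $c^{4}=1$. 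Thus $c$ is $A$-fixed by hypothesis, so $x^{a^{k}}=xc^{k}$ for all $k$, and the $\langle a\rangle$-orbit of $x$ has size equal to the order of $c$; this size divides the odd number $|a|$ and is a power of $2$, so $c=1$. Hence $A$ centralises $P$. This is the ``clean'' computation you allude to; without it your write-up is an outline, not a proof.
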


The following result is a special case of \cite[Lemma $2.10$]{Guo}.
\begin{lemma}\label{minnorm}
Let $P$ be an elementary abelian $2$-subgroup of $G$ and $D$ a
subgroup of $P$ with $1<|D|<|P|.$ If every subgroup of $P$ of order
$|D|$ is normal in $G,$ then every minimal subgroup of $P$ is
central in $G.$
\end{lemma}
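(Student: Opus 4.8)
The plan is to translate the hypothesis into linear algebra by viewing the elementary abelian $2$-group $P$ as a vector space $V$ over the field $\mathbb{F}_2$ of two elements. Write $|P|=2^n$ and $|D|=2^k$, so that the conditions $1<|D|<|P|$ become $1\le k\le n-1$, and a subgroup of $P$ of order $|D|$ is precisely a $k$-dimensional subspace of $V$. A minimal subgroup of $P$ is a line $L=\langle x\rangle$ generated by an involution $x$. Since a normal subgroup of order $2$ is automatically central (conjugation induces a homomorphism $G\to\mathrm{Aut}(L)=1$, forcing $G=C_G(L)$), it suffices to prove that $L\unlhd G$ for each such $L$.

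The key step is the elementary identity that $L$ is the intersection of all the subgroups of order $|D|$ that contain it:
\[
L=\bigcap\{\,W\le P : |W|=|D|,\ L\le W\,\}.
\]
The inclusion $L\le\bigcap$ is clear from the defining condition $L\le W$. For the reverse inclusion I would take $y\in P\setminus L$ and produce a single $W$ of order $|D|$ with $L\le W$ but $y\notin W$: passing to the quotient $V/L$, which has dimension $n-1$, the image $\bar y$ is nonzero, and since $k-1\le n-2$ one can extend $\{\bar y\}$ to a basis and choose a $(k-1)$-dimensional subspace $U$ of $V/L$ missing $\bar y$; its full preimage in $V$ is then a $k$-dimensional subspace $W$ containing $L$ but not $y$. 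Hence $y\notin\bigcap$, giving the claim. I expect this to be the only delicate point, since it is exactly here that the hypothesis $|D|<|P|$ (equivalently $k\le n-1$) is indispensable: it guarantees enough room in $V/L$ to avoid $\bar y$. The argument is a short counting/dimension computation rather than a genuine obstacle.

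Finally I would assemble the conclusion. By hypothesis every subgroup $W$ of $P$ of order $|D|$ is normal in $G$, and the intersection of any family of normal subgroups is normal, so the displayed identity yields $L\unlhd G$. As noted above, a normal subgroup of order $2$ lies in $Z(G)$, so $L$ is central in $G$. Since $L$ was an arbitrary minimal subgroup of $P$, every minimal subgroup of $P$ is central in $G$, as required.
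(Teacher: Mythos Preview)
Your argument is correct. The intersection identity $L=\bigcap\{W\le P:|W|=|D|,\ L\le W\}$ is exactly the right observation, and your verification via the quotient $V/L$ is clean; the constraint $k\le n-1$ is precisely what lets you find a $(k-1)$-dimensional subspace of the $(n-1)$-dimensional quotient avoiding $\bar y$.

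Compared with the paper: the paper does not give a self-contained proof here. It simply invokes \cite[Lemma~2.10]{Guo} to conclude that every minimal subgroup of $P$ is normal in $G$, and then makes the same final remark you do (a normal subgroup of order $2$ is central). So the overall logical shape---reduce to normality of minimal subgroups, then upgrade normality to centrality---is identical, but your proposal supplies the missing internal mechanism via elementary linear algebra over $\mathbb{F}_2$, whereas the paper outsources it. What you gain is a self-contained argument that makes transparent exactly where the hypothesis $|D|<|P|$ is used; what the paper gains is brevity and a pointer to a result stated in somewhat greater generality.
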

\begin{proof}
It follows from \cite[Lemma $2.10$]{Guo} that every minimal subgroup
of $P$ is normal in $G.$ As minimal subgroups of $P$ are cyclic of
order $2,$ they are all central.
\end{proof}

\begin{lemma}\label{fixact}
Let $A$ be an odd order group acting on a $2$-group $P.$ Let $D\leq
P$ with $1<|D|<|P|.$ If every subgroup of $P$ of order $|D|$
\emph{(}and $4$ if $|D|=2$\emph{)} is $A$-invariant, then $A$ acts
trivially on $P.$
\end{lemma}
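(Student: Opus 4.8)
The plan is to reduce everything to Lemma \ref{four}: it suffices to show that $A$ centralizes every element of $P$ of order $2$ and $4$, since then $A$ acts trivially on $P$. Two cases fall out at once. If $P$ is cyclic, then $\operatorname{Aut}(P)$ is a $2$-group, so the odd-order group $A$ acts trivially and we are done. If $|D|=2$, then by hypothesis every subgroup of $P$ of order $2$ and every subgroup of order $4$ is $A$-invariant; since $\operatorname{Aut}(C_2)=1$ and $\operatorname{Aut}(C_4)\cong C_2$ while $A$ has odd order, the group $A$ must fix a generator of each such cyclic subgroup, hence it centralizes every element of order $2$ and every element of order $4$. As each such element generates a cyclic subgroup of order $2$ or $4$, Lemma \ref{four} finishes this case. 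From now on I assume $P$ is non-cyclic and $|D|=2^{d}$ with $d\geq 2$.

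For the remaining case I would induct on $|P|$ and pass to the elementary abelian Frattini quotient $\bar P=P/\Phi(P)$, on which $A$ acts since $\Phi(P)$ is characteristic. The key is the translation of the hypothesis: writing $2^{f}=|\Phi(P)|$, subgroups of $\bar P$ of order $2^{j}$ correspond bijectively to subgroups $H\leq P$ with $\Phi(P)\leq H$ and $|H|=2^{f+j}$, and such an $H$ is $A$-invariant whenever $f+j=d$, because it is then one of the order-$2^{d}$ subgroups of $P$. So when $|D|>|\Phi(P)|$ (that is $d>f$) every subgroup of $\bar P$ of order $2^{d-f}$ is $A$-invariant, and since $1<2^{d-f}<|\bar P|$ I may apply Lemma \ref{minnorm} in the semidirect product $\bar P\rtimes A$ to conclude that every minimal subgroup of $\bar P$ is $A$-invariant; being $1$-dimensional over $\mathbb{F}_2$, each is then fixed pointwise, so $A$ acts trivially on $\bar P$. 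When instead $|D|<|\Phi(P)|$, the order-$2^{d}$ subgroups of $\Phi(P)$ are precisely the order-$2^{d}$ subgroups of $P$ lying inside $\Phi(P)$, all $A$-invariant with $1<2^{d}<|\Phi(P)|$, so the inductive hypothesis applied to the action of $A$ on $\Phi(P)$ gives that $A$ is trivial on $\Phi(P)$.

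Finally I would lift triviality on $\bar P$ (and on $\Phi(P)$) back to $P$: once $A$ centralizes both $P/\Phi(P)$ and $\Phi(P)$, a standard coprime-action argument forces $A$ to centralize $P$, so in particular $A$ fixes every element of order $2$ and $4$ and Lemma \ref{four} applies. The main obstacle is the bookkeeping that merges the two size regimes ($|D|$ large versus small relative to $|\Phi(P)|$) and the borderline $|D|=|\Phi(P)|$ into one clean induction, together with the genuinely small case in which $\bar P$ has order $4$. In that case Lemma \ref{minnorm} is unavailable, since no admissible $\bar D$ exists, and $A$ could a priori act as an order-$3$ automorphism cyclically permuting the three maximal subgroups of $P$. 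I expect to exclude this by noting that these maximal subgroups have order $2^{n-1}$: when $d=n-1$ they are themselves order-$2^{d}$ subgroups, hence individually $A$-invariant, which contradicts a nontrivial permutation; and the case $d\leq n-2$ falls under the small-$|D|$ regime and is absorbed by the induction on $\Phi(P)$. Checking that these cases are exhaustive and dovetail with the coprime lifting is where the real care is required.
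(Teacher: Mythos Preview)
Your handling of the cyclic case, the case $|D|=2$, and the case $|D|>|\Phi(P)|$ is correct and is essentially what the paper does. In the last of these, note that centralizing $P/\Phi(P)$ alone already forces $A$ to centralize $P$ by coprime action; you do not need triviality on $\Phi(P)$ as well. Also, your worry about $|\bar P|=4$ is misplaced: Lemma~\ref{minnorm} only needs $1<|\bar D|<|\bar P|$, and $|\bar D|=2$ with $|\bar P|=4$ is admissible.

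The real gap is the regime $|D|\le|\Phi(P)|$. Knowing that $A$ centralizes $\Phi(P)$ does \emph{not} by itself force $A$ to centralize $P$ (for an extreme illustration take $P$ elementary abelian, so $\Phi(P)=1$), and in this regime you have given no argument that $A$ centralizes $P/\Phi(P)$: every subgroup of $P$ containing $\Phi(P)$ has order strictly larger than $|D|$, so the hypothesis tells you nothing about them directly. The paper supplies exactly the missing step: for any $E\le P$ of order $|D|$ with $\Phi(P)\not\le E$, the product $E\Phi(P)$ is an $A$-invariant subgroup with $|D|=|E|<|E\Phi(P)|<|P|$ (properness since $E\Phi(P)=P$ would force $E=P$ by the Frattini property), so the inductive hypothesis applied to $E\Phi(P)$ yields $[E,A]=1$. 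Thus every $E$ of order $|D|$ not centralized by $A$ must contain $\Phi(P)$, which forces $|\Phi(P)|\le|D|$ and eliminates the case $|D|<|\Phi(P)|$ outright; and in the borderline $|D|=|\Phi(P)|$ the only possibly non-centralized $E$ is $\Phi(P)$ itself, while the remaining order-$|D|$ subgroups generate $P$ and are all centralized. This $E\Phi(P)$ trick is the one idea your outline is missing.
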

\begin{proof}
We can assume that $|D|\geq 4.$ Let $\mathcal{D}=\{E\leq
P\::\:|E|=|D|\}.$ Suppose that $\la \mathcal{D}\ra<P.$ If $|\la
\mathcal{D}\ra|>|D|,$ then by inductive hypothesis, $A$ centralizes
$\la \mathcal{D}\ra,$ so that it centralizes every subgroup of $P$
of order $2$ and $4,$ hence the result follows from Lemma
\ref{four}. If $|\mathcal{D}|=|D|,$ then $P$ has a unique subgroup
of order $|D|.$ As $2<|D|<|P|,$ $P$ must be cyclic and thus $A$
centralizes $P$ by applying Lemma \ref{cyclic} to the semidirect
product $A\ltimes P.$  Therefore, we can assume that $\la
\mathcal{D}\ra=P.$ Next, if $A$ centralizes every element of
$\mathcal{D},$ then as $|D|\geq 4,$ $A$ centralizes every element of
order $2$ and $4,$ and we are done by using Lemma \ref{four}. Hence
there exists $E\in \mathcal{D}$ such that $[E,A]\neq 1.$ It follows
that $\Phi(P)\leq E,$ otherwise, $E<E\Phi(P)<P,$ and by applying the
inductive hypothesis for $E\Phi(P),$ $A$ would centralize $E,$ which
contradicts the choice of $E,$ thus prove the claim. If $\Phi(P)$ is
trivial, then $P$ is elementary abelian, and hence the result
follows from Lemma \ref{minnorm}. Thus $\Phi(P)>1.$ Assume that
$|E/\Phi(P)|\geq 2.$ By Lemma \ref{minnorm} again, $A$ centralizes
$P/\Phi(P),$ and then $[P,A]\leq \Phi(P).$ By Coprime Action
Theorem, $A$ acts trivially on $P$ and we are done. Thus we assume
that $E=\Phi(P).$ For any $F\in \mathcal{D}-\{\Phi(P)\},$ we have
$|F|=|\Phi(P)|$ and $\Phi(P)\neq F,$ it follows that $F<F\Phi(P)<P$
and $F\Phi(P)$ is $A$-invariant. By inductive hypothesis, $A$
centralizes $F,$ and hence $P,$ as $P$ is generated by
$\mathcal{D}-\{\Phi(P)\}.$ The proof is now complete.

\end{proof}
\section{Proofs of the main results}
%-------------------------------------------------------------------------------------------------------------------
\begin{proposition}\label{fixstrong}
Let $P\in Syl_2(G)$ and $D\leq P$ with $2<|D|<|P|.$ Assume that
either $P$ is cyclic or every subgroup of $P$ of order $|D|$ is
strongly closed  in $G,$ then $G$ is $2$-nilpotent.
\end{proposition}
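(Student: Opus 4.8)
The plan is to prove Proposition~\ref{fixstrong} by a minimal-counterexample argument, using the case $|D|=2$ being excluded (so $|D|\geq 4$) and leaning on the structural lemmas already assembled. So let $G$ be a counterexample of minimal order. If $P$ is cyclic we are done immediately by Lemma~\ref{cyclic}, so we may assume every subgroup of $P$ of order $|D|$ is strongly closed in $G$. The first reduction I would carry out is to pass to quotients: by Lemma~\ref{quot}$(b)$, the strong-closure hypothesis is inherited by homomorphic images (the images of the order-$|D|$ subgroups are strongly closed), so minimality should let me assume that $G$ has no nontrivial normal $2'$-subgroup, i.e.\ $O_{2'}(G)=1$; for if $N=O_{2'}(G)>1$, then $\bar G=G/N$ satisfies the hypotheses with $\bar P$ and $\bar D$, hence is $2$-nilpotent, and one pulls this back to $G$ using that $N$ already has a normal $2$-complement structure. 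I would similarly try to reduce to the situation where the hypothesis forces a tight grip on the $2$-structure.

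\medskip
The heart of the argument is to understand the normal closure of the strongly closed subgroups. Here Lemma~\ref{Syl} is the key tool: if $H$ is a strongly closed $2$-subgroup with $N_G(H)/C_G(H)$ a $2$-group, then $H$ is Sylow in $\la H^G\ra$. I would take $H$ to be a subgroup of $P$ of order $|D|$ and analyze $\la H^G\ra$. The strategy is to show that $G$ acts on the relevant $2$-structure in a way that an odd-order Hall subgroup must centralize, invoking Lemma~\ref{fixact}: since every subgroup of $P$ of order $|D|$ (with $|D|\geq 4$) is strongly closed, in particular it is invariant under the action of any odd-order subgroup normalizing $P$, so Lemma~\ref{fixact} forces that odd-order action to be trivial on $P$. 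Combined with Lemma~\ref{genfit} on the generalized Fitting subgroup, this should corner the structure of $F^*(G)$.

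\medskip
I would then split according to whether $G$ has a component. If $E(G)=1$, then $F^*(G)=F(G)$, and since $O_{2'}(G)=1$ we get $F^*(G)=O_2(G)$, whence $C_G(O_2(G))\leq O_2(G)$ by Lemma~\ref{genfit}; this makes $G/O_2(G)$ embed in the automorphism-action on $O_2(G)$, and the fixed-point analysis via Lemma~\ref{fixact} together with Lemma~\ref{min} (every element of order $2$ and $4$ central forces $2$-nilpotence) should yield that $G$ is $2$-nilpotent, a contradiction. If there is a component, I expect to push toward a simple-group configuration and use Lemma~\ref{maxnil} to pin down the only possibilities, namely $L_2(q)$ with $q=2^m\pm1\geq 17$, and then derive a contradiction from the strong-closure hypothesis with $|D|\geq 4$ (the example $SL_2(17)$ showing why $|D|=2$ genuinely fails, hence why the hypothesis $|D|>2$ rules these out).

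\medskip
The main obstacle I anticipate is the component case: ruling out the $L_2(q)$ configurations requires detailed knowledge of the Sylow $2$-subgroups of $L_2(q)$ (which are dihedral) and showing that no subgroup of order $|D|\geq 4$ can be strongly closed in the simple group unless the whole Sylow subgroup collapses. The delicate point is ensuring the strong-closure condition propagates correctly from $G$ to the relevant quasisimple section and genuinely excludes these groups, precisely because the $|D|=2$ case does \emph{not} exclude them. Organizing the inductive bookkeeping so that every proper section still satisfies the hypotheses, while simultaneously extracting the Lemma~\ref{fixact} rigidity, is where the care is needed.
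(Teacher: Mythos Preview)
Your outline assembles the right toolkit (minimal counterexample, $O_{2'}(G)=1$, Lemmas~\ref{Syl}, \ref{fixact}, \ref{genfit}, \ref{maxnil}, \ref{min}), but the architecture is inverted relative to where the real difficulty lies, and one essential reduction is missing.

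First, you omit what the paper isolates as its Claim~2: every proper normal subgroup of $G$ lies in $O_2(G)$. This is obtained by applying induction to $PL$ for $L\lhd G$ (and to $P_1L$ for a maximal $P_1<P$ containing $P\cap L$), and it is what forces $G/O_2(G)$ to be simple and, in the component case, forces $G$ itself to be quasisimple. Without it neither branch of your dichotomy can be carried out.

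Second, you have the two cases reversed in difficulty. When $E(G)\neq 1$, Claim~2 makes $G$ quasisimple, and then for any $H\leq P$ with $|H|=|D|$: if $H\not\leq Z(G)$ then $\la H^G\ra=G$, and since $P\leq N_G(H)<G$, induction gives $N_G(H)$ $2$-nilpotent, so $N_G(H)/C_G(H)$ is a $2$-group and Lemma~\ref{Syl} yields $H\in Syl_2(G)$, contradicting $|H|<|P|$. Hence every such $H$ lies in $Z(G)$, all elements of order $2$ and $4$ are central (here $|D|\geq 4$ is used), and Lemma~\ref{min} finishes. No Baumann, no $L_2(q)$ analysis.

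The genuinely hard case is $F^*(G)=O_2(G)$. Your plan to apply Lemma~\ref{fixact} directly here does not work: odd-order elements of $G$ act on $O_2(G)$, not on $P$, and if $|D|\geq|O_2(G)|$ there are no order-$|D|$ subgroups inside $O_2(G)$ for Lemma~\ref{fixact} to bite on. The paper instead shows (via induction on maximal overgroups of $P$ together with Lemma~\ref{genfit}) that $P$ is maximal in $G$; then $G/O_2(G)$ is simple with a maximal Sylow $2$-subgroup, and Lemma~\ref{maxnil} is applied to this \emph{quotient}. The resulting $L_2(q)$ is eliminated not via its dihedral Sylow $2$-subgroup but by pulling back a dihedral maximal subgroup $D_{2s}$ with $s$ odd, first showing $|D|\leq|O_2(G)|$, then applying induction to the preimage to make it $2$-nilpotent, and forcing its odd part into $C_G(O_2(G))\leq O_2(G)$. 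The solvable possibility for $G/O_2(G)$ collapses to $P=O_2(G)\unlhd G$, and that subcase is exactly where your Lemma~\ref{fixact} argument is valid (the paper's Claim~3).
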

\begin{proof}
Suppose that the proposition is false. Let $G$ be a minimal counter
example. By Lemma \ref{cyclic}, we can assume that $P$ is
non-cyclic.

{\bf Claim $1.$} $O_{2'}(G)=1.$ Assume that $O_{2'}(G)\neq 1.$
Passing to $\bar{G}=G/O_{2'}(G),$ we see that $\bar{G}$ satisfies
the hypothesis of the proposition by Lemma \ref{quot}$(b),$ so that
by inductive hypothesis, $\bar{G}$ is $2$-nilpotent and hence $G$ is
$2$-nilpotent.

{\bf Claim $2.$} If $L\unlhd G$ and $L\neq G,$ then $L\leq O_2(G).$
Assume that $L$ is a proper normal subgroup of $G$ which is not a
$2$-group. As $L\unlhd G,$ $PL$ is a subgroup of $G.$ Assume that
$PL\neq G.$ By Lemma \ref{quot}$(a)$ and the inductive hypothesis,
$PL$ is $2$-nilpotent. Let $Q=O_{2'}(PL).$ Then $1\neq Q\leq L\unlhd
G$ and since $Q$ is characteristic in $L,$ we have $Q\unlhd G$ and
hence $Q\leq O_{2'}(G)=1$ by Claim $2,$ which is a contradiction.
Thus $G=PL.$ Let $U=P\cap L.$ Then $U\in Syl_2(L).$ Suppose that $U$
is not maximal in $P.$ Let $P_1$ be a maximal subgroup of $P$ that
contains $U.$ By comparing the order, we see that $P_1L$ is a proper
subgroup of $PL=G.$ Then by Lemma \ref{max}, $2<|D|<|P_1|$ and so
$P_1L$ is $2$-nilpotent by induction. Arguing as above, we obtain
$1\neq O_{2'}(P_1L)\leq L\unlhd G$ and hence $O_{2'}(P_1L)\leq
O_{2'}(G)=1.$ This contradiction shows that $U$ is maximal in $P.$
Now by Lemma \ref{max} again, $2<|D|<|U|.$ By induction again, $L$
is $2$-nilpotent which leads to a contradiction as above. This
proves our claim.

{\bf Claim $3.$} $N_G(P)$ is $2$-nilpotent. If $N_G(P)<G,$ then it
is $2$-nilpotent by induction and we are done. Thus assume that
$N_G(P)=G.$ Then $P\unlhd G$ and hence every subgroup of $P$ of
order $|D|$ is both subnormal and strongly closed in $G$ so that
they are normal in $G$ by Lemma \ref{quot}$(c).$ By Schur-Zassenhaus
Theorem, there exists a subgroup $A$ of odd order such that $G=PA.$
Since every subgroup of $P$ of order $|D|$ with $2<|D|<|P|$ is
$A$-invariant, by Lemma \ref{fixact}, $A$ centralizes $P$ and hence
$G$ is $2$-nilpotent, which contradicts our assumption.

{\bf Claim $4.$} $F^*(G)=O_2(G).$ As $O_{2'}(G)=1,$ we have
$F^*(G)=O_2(G)E(G).$ Assume that $E(G)\neq 1.$ By Claim $2,$ we have
$E(G)=G$ and then by applying that claim again, we see that $G$ must
be a quasisimple group. Let $H\leq P$ be any subgroup of order
$|D|.$ Assume first that $H\not\leq Z(G).$ Then $H$ is not normal in
$G$ so that $\la H^G\ra=G$ and $P\leq N_G(H)<G.$ By induction
$N_G(H)$ is $2$-nilpotent so that $N_G(H)/C_G(H)$ is a $2$-group. By
Lemma \ref{Syl}, $H\in Syl_2(G),$ which is a contradiction as
$|H|<|P|.$ Thus $H\leq Z(G)$ and since $|D|>2,$ every subgroup of
order $2$ or $4$ is central in $G,$ whence the result follows from
Lemma \ref{min}.

{\bf The final contradiction.} We first show that $P$ is maximal in
$G.$ Let $L$ be any maximal subgroup of $G$ that contains $P.$ By
induction, $L=PO_{2'}(L).$ Since $O_2(G)\unlhd L,$ we obtain
$[O_2(G),O_{2'}(L)]\leq O_2(G)\cap O_{2'}(L)=1,$ hence
$O_{2'}(L)\leq C_G(O_2(G))\leq O_2(G)$ by Lemma \ref{genfit}. Thus
$O_{2'}(L)=1$ which implies that $P$ is maximal in $G.$ Moreover by
Claim $2,$ $O_2(G)$ is a maximal normal subgroup of $G,$ and then
$\bar{G}=G/O_2(G)$ is a simple group with a nilpotent maximal
subgroup $P/O_2(G).$ Assume that $\bar{G}$ is non-solvable. Then  by
Lemma \ref{maxnil}, $\bar{G}\cong L_2(q),$ where $q$ is a prime of
the form $2^m\pm 1\geq 17.$ Let $\bar{M}$ be the maximal subgroup of
$L_2(q)$ which is isomorphic to the dihedral group $D_{2s},$ where
$s>1$ is odd. Let $M,K$ and $A$ be the full inverse images of
$\bar{M},$ the $2$-Sylow subgroup and the cyclic subgroup of order
$s$ of $\bar{M}$ in $G.$ By Schur-Zassenhauss Theorem, $A=O_2(G)T,$
where $|T|=s.$ Also $O_2(G)\leq K\in Syl_2(M)$ and $M=KT,$ where
$A\unlhd M.$ We next show that $|D|\leq |O_2(G)|.$ Assume false.
Then $|O_2(G)|<|D|.$ Now if $|O_2(G)|<|D|/2$ then $\bar{G}$
satisfies the hypothesis of Proposition \ref{fixstrong} with
$|\bar{D}|=|D|/|O_2(G)|,$ and hence $\bar{G}$ is $2$-nilpotent,
contradicts the simplicity of $G.$ Thus we can assume that
$|O_2(G)|=|D|/2.$ Let $H\leq P$ be such that $O_2(G)\leq H$ and
$|\bar{H}|=|H/O_2(G)|=2.$ In this case, $P\leq N_G(H)<G$ and so
$N_G(H)=P$ as $P$ is maximal in $G.$ By Lemma \ref{quot}$(b),$ we
have $N_{\bar{G}}(\bar{H})=\bar{P}.$ Thus $1\neq \bar{H}$ is
strongly closed in $\bar{G}$ and $N_{\bar{G}}(\bar{H})$ is a
$2$-group. By Lemma \ref{Syl}, $\bar{H}=\bar{P}\in Syl_2(G)$ and so
by Lemma \ref{cyclic}, $\bar{G}$ is $2$-nilpotent. This
contradiction shows that $|D|\leq |O_2(G)|.$ Therefore $2<|D|\leq
|O_2(G)|<|K|,$ where $K\in Syl_2(M).$ By induction again,  $M=KT$ is
$2$-nilpotent and thus $O_{2'}(M)=T\unlhd M.$ Hence $T\leq
C_G(O_2(G))\leq O_2(G)$ and then $T=1,$ which contradicts the fact
that $|T|=s>1.$ We conclude that $\bar{G}$ is solvable.  Thus
$\bar{G}$ must be a cyclic subgroup of prime order. Clearly
$|\bar{G}|>2$ otherwise, $G$ is a $2$-group. Let $r=|\bar{G}|$ and
$R\in Syl_r(G).$ Then $G=O_2(G)R$ and $r>2,$ which implies that
$P=O_2(G)\unlhd G,$ and hence $G=N_G(P)$ is $2$-nilpotent by Claim
$3.$ The proof is now complete.
\end{proof}

\noindent
{\bf Proof of Theorem \ref{th1}.}
If $P$ is cyclic or $|D|>2$ or $|D|=2$ but $|P|>2|D|=4$ then the
theorem follows from Proposition \ref{fixstrong}. Thus we can assume
that $P$ is non-cyclic, $|D|=2$ and $|P|=4.$ It follows that every maximal
subgroup of $P$ is strongly closed in $G,$ hence $G$ is $2$-nilpotent by Lemma \ref{max}.  The proof is now complete.

\noindent {\bf Proof of Theorem \ref{th3}.} By Theorem \ref{th2},
$G$ possesses a Sylow tower of supersolvable type. Let $p$ be the
largest prime divisor of $|G|.$ If $p=2,$ then $G$ must be a
$2$-group and hence it is supersolvable. Assume that $p>2.$ The
proof now proceeds as in that of Theorem $3.5$ in \cite{Guo}.

\noindent {\bf Proof of Theorem \ref{th4}.} By Lemma \ref{quot} and
Theorem \ref{th3}, $E$ is supersolvable. Let $p$ be the largest
prime divisor of $|E|.$ If $p>2,$ then the result follows as in
Theorem $3.6$ in \cite{Guo}. Hence we can assume that $p=2$ and so
$E$ is a $2$-group. As $G$ is supersolvable whenever $G$ is a
$2$-group, we also assume that $G$ is not a $2$-group. Since $G/E$
is supersolvable, it has a Sylow tower of supersolvable type and so
$G/E$ is $2$-nilpotent. Let $K/E$ be the normal $2'$-complement of
$G/E.$ By Schur-Zassenhauss Theorem, $K=EA$ where $A$ is of odd
order. Let $E\leq P\in Syl_2(G).$ Then $G=AP,$  where $AE\unlhd G.$
As $|A|$ is odd, $E\in Syl_2(AE)$ and $AE$ satisfies the hypothesis
of Theorem \ref{th1} so that $AE$ is $2$-nilpotent. Hence
$A=O_{2'}(AE)\unlhd AE\unlhd G,$ and so $A\unlhd G.$ We have
$G/A\cong P$ is supersolvable and by hypothesis, $G/E$ is also
supersolvable. Since the class of supersolvable groups is a
saturated formation, we have $G/(A\cap E)\cong G$ is supersolvalbe.
This completes the proof.

\subsection*{Acknowledgment} The author is grateful to the referee
for his or her comments. The author is also grateful to  Prof. Chris
Parker for pointing out reference \cite{Bau} and simplifying the
proof of Lemma \ref{fixact}.

\end{document}